\newtheorem{thm}          {Theorem}     [section]
\newtheorem{prop}         {Proposition} [section]
\newtheorem{lemma}        {Lemma}	[section]
\newtheorem{rem}          {Remark}	[section]
\newcommand{\RR}{\mathbb{R}}
\newcommand{\cD}{\mathcal{D}}
\newcommand{\cR}{\mathcal{R}}
\newcommand{\supp}{\operatorname{supp}}
\renewcommand{\div}{\operatorname{div}}
\newcommand{\Jac}[1]{\operatorname{Jac}{(#1)}}
\newcommand{\nd}{\noindent}
\newcommand{\eps}{\varepsilon}
\numberwithin{equation}{section}
\begin{document}

\title{Closure of smooth maps in $W^{1,p}(B^3;S^2)$}

\author{Augusto C.\@ Ponce}

\author{Jean Van Schaftingen}

\address{
Universit\'e catholique de Louvain\hfill\break\indent 
D\'epartement de math\'ematique\hfill\break\indent 
Chemin du Cyclotron 2\hfill\break\indent 
1348 Louvain-la-Neuve, Belgium} 

\email{Augusto.Ponce@uclouvain.be, Jean.VanSchaftingen@uclouvain.be}

\dedicatory{To Jean Mawhin and Patrick Habets with esteem and affection}

\date{\today}

\begin{abstract}
For every $2 < p < 3$, we show that $u \in W^{1,p}(B^3;S^2)$ can be strongly approximated by maps in $C^\infty(\overline B \,\!^3;S^2)$ if, and only if, the distributional Jacobian of $u$ vanishes identically. This result was originally proved by Bethuel-Coron-Demengel-H\'elein, but we present a different strategy which is motivated by the $W^{2,p}$-case.
\end{abstract}

\keywords{Sobolev mappings, distributional Jacobian, density in Sobolev spaces}

\subjclass[2000]{46E35, 46T30, 58D15}

\maketitle


\section{Introduction}\label{sec1}

Let $B^3$ be the unit ball and $S^2$ be the unit sphere of $\RR^3$. Given $1 \leq p < \infty$, consider
\begin{equation}\label{1.1}
W^{1,p}(B^3;S^2) = \Big\{ u \in W^{1,p}(B^3;\RR^3)\ ;\ u(x) \in S^2 \text{ a.e.}  \Big\}.
\end{equation}
Although $W^{1,p}(B^3;S^2)$ is not a vector space, it inherits the usual distance from $W^{1,p}(B^3;\RR^3)$; namely,
\begin{equation}\label{1.2}
\|u-v\|_{W^{1,p}} = \|u-v\|_{L^p} + \|\nabla u- \nabla v\|_{L^p} \quad \forall u,v \in W^{1,p}(B^3;S^2).
\end{equation}

Using standard extension and convolution arguments, it is easy to see that every $u \in W^{1,p}(B^3;S^2)$ can be approximated by maps $\varphi \in C^\infty(\overline B \,\!^3;\RR^3)$ with respect to the $W^{1,p}$-distance.
If we assume in addition that $p > 3$, then by Morrey's estimates such approximation converges uniformly to $u$, and we can thus project this sequence back to $S^2$ to obtain an approximation in $C^\infty(\overline B \,\!^3; S^2)$. Although Morrey's estimates are no longer true in the critical case $p=3$, this argument still works as a consequence of the theory of vanishing mean oscillation (VMO) functions:

\begin{thm}[Schoen-Uhlenbeck~\cite{SchUhl:83}]\label{thm1.0}
Let $p \ge 3$. Then, $C^\infty(\overline{B}\,\!^3;S^2)$ is dense in $W^{1,p}(B^3;S^2)$.
\end{thm}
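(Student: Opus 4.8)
The plan is to show that an arbitrary $u \in W^{1,p}(B^3;S^2)$ with $p \geq 3$ can be strongly approximated in the $W^{1,p}$-distance by maps in $C^\infty(\overline B\,\!^3;S^2)$, by first producing smooth $\RR^3$-valued approximations (which exist by standard extension and mollification) and then projecting back onto the sphere. The obstruction to naive projection is that a generic $W^{1,p}$ approximation $\varphi_k \to u$ need not stay away from the origin; however, in the range $p \geq 3$ one can control averages. First I would invoke the fact (due to the theory of VMO functions, cf.\@ Brezis–Nirenberg) that $W^{1,3} \hookrightarrow \mathrm{VMO}$; since $S^2$-valued maps have bounded mean oscillation controlled by their $W^{1,p}$ norm for $p \geq 3$, the mollifications $u_\delta := \rho_\delta * u$ satisfy
\begin{equation*}
\sup_{x,\,r>0}\ \aint_{B_r(x)} \bigl| u_\delta(x) - \aint_{B_r(x)} u_\delta \bigr|\ \longrightarrow\ 0 \quad \text{as } \delta \to 0,
\end{equation*}
uniformly, and in particular $\dist(u_\delta(x),S^2) \to 0$ uniformly on compact subsets. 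Thus for $\delta$ small the projection $\Pi \circ u_\delta$, where $\Pi(y) = y/|y|$, is well defined and smooth on a neighborhood of $\overline B\,\!^3$ (after a harmless rescaling/extension step to handle the boundary).

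The second step is to verify that $\Pi \circ u_\delta \to u$ strongly in $W^{1,p}$. Here I would split: $\|\Pi\circ u_\delta - u\|_{L^p} \to 0$ follows from $u_\delta \to u$ in $L^p$ together with the uniform Lipschitz bound on $\Pi$ near $S^2$. For the gradients, write $\nabla(\Pi\circ u_\delta) = (D\Pi)(u_\delta)\,\nabla u_\delta$; since $(D\Pi)(u_\delta) \to (D\Pi)(u) = P_{u^\perp}$ (the tangential projection) boundedly and $\nabla u_\delta \to \nabla u$ in $L^p$, and since $(D\Pi)(u)\nabla u = \nabla u$ (because $u$ is already $S^2$-valued, its gradient is tangential a.e.), one gets $\nabla(\Pi\circ u_\delta) \to \nabla u$ in $L^p$. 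The boundedness and convergence of $(D\Pi)(u_\delta)$ requires that $u_\delta$ range in a fixed compact annulus around $S^2$, which is exactly what the VMO/uniform-closeness argument provides.

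The main obstacle is precisely the uniform control of $\dist(u_\delta(x),S^2)$: for $p < 3$ this fails (one can have topological singularities, as in the companion result of this paper), so the proof genuinely uses the VMO embedding at $p = 3$ — the only delicate point is establishing that $S^2$-valued $W^{1,3}$ maps lie in VMO with modulus controlled uniformly along the mollification, and that the projection step interacts well with the boundary of $B^3$ (handled by extending $u$ slightly beyond $\overline B\,\!^3$ before mollifying, using a bi-Lipschitz collar). Once the uniform closeness is in hand, everything else is routine. The case $p > 3$ is strictly easier, since there Morrey's embedding gives $u_\delta \to u$ uniformly directly, with no appeal to VMO.
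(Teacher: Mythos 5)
Your argument is correct and is essentially the one the paper itself sketches in the introduction: mollify to get smooth $\RR^3$-valued approximations, use Morrey's estimates (for $p>3$) or the VMO/Poincar\'e control of averages (for $p=3$) to guarantee that the mollifications stay uniformly close to $S^2$, and then project via $y \mapsto y/|y|$, checking $W^{1,p}$-convergence by the chain rule and dominated convergence. No substantive differences to report.
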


The reader may wonder what happens if $1 \le p < 3$. It turns out that such conclusion is still true if $1 \le p < 2$, but surprisingly it fails if $2 \le p < 3$:

\begin{thm}[Bethuel-Zheng~\cite{BetZhe:88}]\label{thm1.1}
Let $1 \leq p < 3$. Then, $C^\infty(\overline{B}\,\!^3;S^2)$ is dense in $W^{1,p}(B^3;S^2)$ if, and only if, $1 \leq p < 2$.
\end{thm}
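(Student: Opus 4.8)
The plan is to handle the two ranges separately. For the positive direction, when $1 \le p < 2$, the idea is that $W^{1,p}$-functions into $S^2$ are too weak to "see" topological singularities, because a generic two-dimensional slice carries no homotopy information. Concretely, I would argue as follows. First extend $u$ to a map on a larger ball and mollify to get $u_\eps \in C^\infty$ with $u_\eps \to u$ in $W^{1,p}$; then $u_\eps$ is not $S^2$-valued, but $|u_\eps| \to 1$ in $W^{1,p}$-sense, and the bad set $\{|u_\eps| \le 1/2\}$ has small measure. The key point is that for $p < 2$ one can choose, by a Fubini / averaging argument over a family of grids of mesh $\delta$, a decomposition of $B^3$ into small cubes so that on the $2$-skeleton the restriction of $u_\eps$ has small $W^{1,p}$-energy on each face — and since $p<3$ the relevant scaling on $2$-faces is subcritical, so after projection $\pi \circ u_\eps$ (with $\pi$ the nearest-point retraction onto $S^2$, defined where $|u_\eps|>1/2$) the restriction to the $2$-skeleton is a genuine $S^2$-valued map whose energy is small. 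Because $p<2 = \dim(\text{faces})$ is, in fact, below the dimension, the restriction to $2$-cells need not even be continuous; instead I would restrict further to a generic $1$-skeleton (where $p \ge 1$ is critical/subcritical) and then use that $\pi_1(S^2)=0$ to extend continuously into each $2$-face, and then the crucial fact $p<2$ means the homogeneous degree-zero extension $x \mapsto g(x/|x|)$ into each small $3$-cube around its center has finite, controlled $W^{1,p}$-energy (the singularity $|x|^{-1}$ is in $L^p_{\mathrm{loc}}(\RR^3)$ exactly when $p<3$, and the full gradient estimate closes when $p<2$ after summing over $\sim \delta^{-3}$ cubes each contributing $\delta^{3-p}$ times the $W^{1,p}$-energy of the boundary datum). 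Letting $\delta,\eps \to 0$ in the right order produces the desired smooth approximants.

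For the negative direction, when $2 \le p < 3$, I would exhibit an explicit $u$ that cannot be approximated, the canonical choice being $u(x) = x/|x|$ (restricted to $B^3$). First check $u \in W^{1,p}(B^3;S^2)$: $|\nabla u| \sim |x|^{-1}$, and $\int_{B^3} |x|^{-p}\,dx < \infty \iff p<3$, so this membership is exactly where the upper bound $p<3$ enters. The obstruction is topological: $u$ has a nonremovable singularity at the origin, detected by its degree on small spheres. The strategy is to define, for each $r \in (0,1)$, the restriction $u|_{\partial B_r}$, which is a continuous map $S^2 \to S^2$ of degree $1$; show this degree is stable under small $W^{1,p}$-perturbations for a.e. $r$ (here $2 \le p$ is essential, since the trace on $\partial B_r$ then lies in $W^{1-1/p,p}(S^2;S^2) \hookrightarrow \mathrm{VMO}(S^2;S^2)$ for $p \ge 2$, and VMO degree theory à la Brezis–Nirenberg makes the degree a continuous functional); and observe that any $\varphi \in C^\infty(\overline B\,\!^3;S^2)$, being smooth up to the center, has $\deg(\varphi|_{\partial B_r}) = 0$ for every small $r$. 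Hence $\|u - \varphi\|_{W^{1,p}}$ cannot be made small, as it would force degree $1$ to be approximated by degree $0$ on a.e.\ small sphere. This also shows the failure of density is robust: the set $\{v \in W^{1,p} : v = x/|x| \text{ near } 0\}$ is at positive distance from $C^\infty(\overline B\,\!^3;S^2)$.

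The main obstacle, and the technical heart of the positive direction, is the \emph{good-slicing / Fubini argument}: one must choose the grid position so that simultaneously (i) $u_\eps$ restricted to the $2$-skeleton has controlled energy, (ii) $|u_\eps| > 1/2$ on the $1$-skeleton so that $\pi \circ u_\eps$ is well-defined and $S^2$-valued there, and (iii) the homogeneous extensions glue across faces into a global $W^{1,p}$ map; estimating the total energy of this "projection onto a net and fill in radially" construction, and verifying the $W^{1,p}$ bound on the homogeneous extension term, is where the condition $p<2$ is used decisively and where the bookkeeping is delicate. For the negative direction the only subtlety is justifying continuity of the degree in the $W^{1,p}$ topology for $p \ge 2$ — i.e.\ the passage through VMO and the Brezis–Nirenberg degree — and confirming it genuinely breaks down for $p<2$ (where indeed $x/|x|$ \emph{can} be approximated, consistent with the positive part).
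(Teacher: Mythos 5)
The paper itself proves only the non-density half of this statement (in the Introduction, via the example \eqref{1.3}); the density half for $1\le p<2$ is quoted from Bethuel--Zheng without proof. Your negative direction follows the paper's argument in outline, but one step is wrong: the trace space $W^{1-1/p,p}(\partial B_r;S^2)$ does \emph{not} embed into $\mathrm{VMO}$ for $2\le p<3$, since $W^{s,q}$ of a $2$-dimensional manifold lies in $\mathrm{VMO}$ only when $sq\ge 2$, and here $sq=p-1<2$. The correct route --- and the paper's --- is the Fubini one: from $\varphi_n\to u$ in $W^{1,p}(B^3)$ one extracts, for a.e.\ $r$ (along a subsequence), convergence of the \emph{restrictions} in the full space $W^{1,p}(\partial B_r)$, which is much better than convergence of traces; then $W^{1,p}$ of a $2$-manifold embeds into $C^0$ for $p>2$ (Morrey) and into $\mathrm{VMO}$ for $p=2$, and degree stability applies. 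With that repair your negative direction coincides with the paper's.

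The genuine gaps are in the positive direction. First, the energy bookkeeping you invoke for the homogeneous extension into the $3$-cubes (each cube contributing $\sim\delta\int_{\partial Q}|\nabla\cdot|^p$, summing via the good-grid choice to $\lesssim\int_{B^3}|\nabla u_\eps|^p$) is valid for \emph{all} $p<3$, makes no use of $p<2$, and produces a map whose energy is merely comparable to that of $u$ and which has a point singularity at the center of each cube of uncontrolled degree --- i.e.\ a map of the class $\cR^{1,p}$ of Section~\ref{sec7}, not a smooth map. Second, once you replace $u_\eps$ on the $2$-faces by an arbitrary continuous extension of its $1$-skeleton values (which is all that $\pi_1(S^2)=0$ gives you), the homogeneous $3$-dimensional extension no longer converges to $u$: contractibility of the boundary loop provides neither an energy bound on the filled face nor any closeness to $u_\eps$ there, and these errors do not vanish as $\delta\to 0$. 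The hypothesis $p<2$ enters Bethuel--Zheng's proof at a different point. One first approximates $u$ by maps singular along finitely many curves --- for instance $\pi_a\circ u_\eps$, where $\pi_a$ is the radial projection of $\RR^3\setminus\{a\}$ onto $S^2$ from a generic center $a$ with $|a|<1/2$, singular on the smooth curve $u_\eps^{-1}(a)$, the averaged energy over $a$ being finite because $\int_{|a|<1/2}|y-a|^{-p}\,da<\infty$ for $p<3$. One then removes each curve singularity by contracting its \emph{link}, a circle in $S^2$, using $\pi_1(S^2)=0$, and filling homogeneously in the two directions transverse to the curve; the gradient of the filling blows up like $\rho^{-1}$ in the distance $\rho$ to the curve, and $\int_0^1\rho^{-p}\,\rho\,d\rho<\infty$ precisely when $p<2$. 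Your plan should be reorganized around this two-step scheme (reduce to one-dimensional singular sets, then kill them using $\pi_1(S^2)=0$ and transverse integrability) rather than around degree-zero fillings of $3$-cubes.
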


The reason for the lack of density in the case $2 \leq p < 3$ is the existence of ``topological singularities'' of maps in $W^{1,p}(B^3;S^2)$. For instance, given a smooth map $g : S^2 \to S^2$, let
\begin{equation}\label{1.3}
u(x) = g\big( \tfrac{x}{|x|} \big) \quad \forall x \in B^3 \setminus \{0\}.
\end{equation}
In this case, $u \in W^{1,p}(B^3;S^2)$ for every $2 \leq p <3$, but $u$ cannot be strongly approximated by smooth maps $\varphi : \overline{B}\,\!^3 \to S^2$ in $W^{1,p}$ if $\deg{g} \neq 0$. 

Indeed, assume by contradiction that there exists a sequence $(\varphi_n)$ in $C^\infty(B^3; S^2)$ strongly converging to $u$ in $W^{1,p}(B^3; S^2)$. By Fubini's theorem, for a.e.\@ $r > 0$,
\[
\varphi_n \to u \quad \text{strongly in } W^{1,p}(\partial B_r; S^2).
\]
If $2 < p < 3$, then by Morrey's estimates $\varphi_n \to u$ uniformly on $\partial B_r$ for any such $r$ (note that $\partial B_r$ has dimension $2$) and thus 
\begin{equation}\label{1.3a}
\deg{(\varphi_n|_{\partial B_r})} \to \deg{(u|_{\partial B_r})} = \deg{g}.
\end{equation}
Since for every $n \ge 1$, $\deg{(\varphi_n|_{\partial B_r})} = 0$, this would imply that $\deg{g} = 0$, which is a contradiction. When $p=2$, by continuity of the degree under VMO-convergence (see \cite{BreNir:95}) assertion \eqref{1.3a} still holds and we can conclude as before.

\medskip
In the above example, $u$ has a topological singularity at $0$. This raises the question of how to find such singularities for a general map $u \in W^{1,p}(B^3; S^2)$. Their location and strength can be detected using a simple yet powerful tool introduced by Brezis-Coron-Lieb~\cite{BreCorLie:86}: the distributional Jacobian ``Jac''.  

Given $p \geq 2$ and a map $u \in W^{1,p}(B^3;S^2)$, consider the vector field
\begin{equation}\label{1.8}
D(u) = \big( u \cdot u_{x_2} \wedge u_{x_3}, u \cdot u_{x_3} \wedge u_{x_1}, u \cdot u_{x_1} \wedge u_{x_2}  \big),
\end{equation}
where $u_{x_i} \in L^p(B^3; \RR^3)$ denotes the partial derivative of $u$ in the weak sense.
Since $u \in W^{1,2} \cap L^\infty$, we have $D(u) \in L^1(B^3;\RR^3)$. We then define the distributional Jacobian as
\begin{equation}\label{1.9}
\Jac{u} = \frac{1}{3} \div{D(u)} \quad \text{in } \cD'(B^3);
\end{equation}
more precisely,
$$
\langle \Jac{u}, \zeta \rangle = - \frac{1}{3} \int_{B^3} D(u) \cdot \nabla \zeta \quad \forall \zeta \in C_0^\infty(B^3).
$$
For instance, if $u$ is smooth (in which case there are no singularities), then one has
$$
\Jac{u} = u_{x_1} \cdot u_{x_2} \wedge u_{x_3} = 0.
$$
On the other hand, if $u$ is given by \eqref{1.3}, then
$$
\Jac{u} = \frac{4 \pi}{3} (\deg{g}) \, \delta_0,
$$
where $\delta_0$ denotes the Dirac mass at the origin.

\medskip
Since smooth maps are not dense in $W^{1,p}(B^3;S^2)$ when $2 \leq p < 3$, one should be able to identify those maps in $W^{1,p}(B^3;S^2)$ which can be approximated by functions in $C^\infty(\overline{B}\,\!^3;S^2)$. It turns out that the only obstruction of density of smooth maps is of topological nature:

\begin{thm}[Bethuel~\cite{Bet:90}]\label{thm1.2}
Let $u \in W^{1,2}(B^3;S^2)$. Then, there exists a sequence $(\varphi_n) \subset C^\infty(\overline{B}\,\!^3;S^2)$ such that 
\begin{equation}\label{1.4}
\varphi_n \to u \quad \text{strongly in } W^{1,2}
\end{equation}
if, and only if,
\begin{equation}\label{1.5}
\Jac{u} = 0 \quad \text{in } \cD'(B^3).
\end{equation}
\end{thm}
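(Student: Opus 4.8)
The plan is to prove the two implications separately, with the ``only if'' direction being the easy one. If $\varphi_n \to u$ strongly in $W^{1,2}$, then $D(\varphi_n) \to D(u)$ in $L^1$ (the map $w \mapsto D(w)$ is continuous from $W^{1,2}\cap L^\infty$ into $L^1$ when the $L^\infty$ norms stay bounded, which is automatic here since everything is $S^2$-valued), hence $\Jac \varphi_n \to \Jac u$ in $\cD'(B^3)$; since each $\varphi_n$ is smooth, $\Jac \varphi_n = 0$, and therefore $\Jac u = 0$. This settles \eqref{1.5}.

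The substantial direction is that $\Jac u = 0$ implies approximability. First I would fix a generic concentric sphere $\partial B_\rho$ on which $u$ restricts to a $W^{1,2}(\partial B_\rho; S^2)$ map (Fubini), and observe that since $\dim \partial B_\rho = 2$, smooth maps \emph{are} dense in $W^{1,2}(\partial B_\rho; S^2)$ (this is the subcritical two-dimensional case covered by the Schoen--Uhlenbeck / Bethuel--Zheng circle of results); so one can reduce to maps that are already smooth near $\partial B^3$. The heart of the matter is then a decomposition argument: one writes $u$, up to a small $W^{1,2}$-error, as a map that is smooth away from a finite set of points $\{a_1,\dots,a_k\}$ in $B^3$, near each of which $u$ looks like $x \mapsto g_i\bigl(\tfrac{x-a_i}{|x-a_i|}\bigr)$ for some smooth $g_i : S^2 \to S^2$. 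The hypothesis $\Jac u = 0$ forces $\sum_i (\deg g_i)\,\delta_{a_i} = 0$, i.e. (after merging points) all the degrees vanish; equivalently, one can pair up the singularities so that the total ``charge'' is zero. Here I would invoke the dipole-removal / connecting-curves technique of Brezis--Coron--Lieb and Bethuel: a pair of singularities of opposite degree $+d$ and $-d$ can be cancelled by inserting a thin ``dipole'' along a curve joining them at arbitrarily small $W^{1,2}$-cost (the cost of a degree-$d$ dipole over a tube of length $\ell$ and radius $\eta$ is $O(\ell \eta^{-1} \cdot \eta) \cdot$(energy per cross-section)$= O(\ell |d|)$ in $L^2$ of the gradient once the tube is made thin in the right scaling — more precisely one uses that the $W^{1,2}$-energy of the modification is controlled by the length of the connecting curves, which can be taken small). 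After all singularities are cancelled this way, the resulting map has no topological obstruction on small spheres and can be approximated by smooth maps by a standard mollification-plus-projection argument.

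Concretely the steps, in order, would be: (1) prove the ``only if'' direction via $L^1$-continuity of $D(\cdot)$; (2) reduce to $u$ smooth in a neighbourhood of $\partial B^3$ using Fubini and $2$-d density on spheres; (3) approximate $u$ by a map $v$ which is smooth outside a finite set $\Sigma$, with $v$ radial (of the form $g_i(\tfrac{x-a_i}{|x-a_i|})$) near each point of $\Sigma$ and $\|u-v\|_{W^{1,2}}$ small — this uses a covering/good-sphere selection argument to locate the singular set and control the energy, exactly the kind of construction behind Bethuel's theorem; (4) compute $\Jac v = \tfrac{4\pi}{3}\sum_i (\deg g_i)\,\delta_{a_i}$ and note $\Jac v$ is $\cD'$-close to $\Jac u = 0$, which by the discreteness of the right-hand side forces (after clustering nearby $a_i$) the total degree to vanish, so the singularities split into opposite-degree pairs; (5) remove each pair by a Brezis--Coron--Lieb dipole along a short connecting curve, at $W^{1,2}$-cost proportional to the curve length, obtaining a map with no singularities up to small error; (6) mollify and project onto $S^2$ to land in $C^\infty(\overline B\,\!^3; S^2)$, and conclude by a diagonal argument.

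The main obstacle is step (5), the quantitative dipole construction and the bookkeeping that makes the total $W^{1,2}$-error small: one must choose the order of cancellation, the connecting curves (avoiding collisions, staying inside $B^3$, keeping away from where $u$ is already smooth), and the tube radii as functions of the target error $\eps$, so that the accumulated cost of all dipoles — plus the errors from steps (3) and (6) — stays below $\eps$. This is where the paper's promised ``different strategy, motivated by the $W^{2,p}$ case'' presumably enters: rather than the original ad hoc dipole estimates, one organizes the cancellation and the energy accounting in a cleaner, more systematic way. A secondary technical point is verifying that the identity $\Jac v = \tfrac{4\pi}{3}\sum_i(\deg g_i)\delta_{a_i}$ together with $\cD'$-smallness really does pin down the degrees; this needs the observation that a sum of Dirac masses that is small in $\cD'$ against, say, test functions separating the points must have small coefficients, so with the coefficients being integer multiples of $4\pi/3$ they must actually be zero once the $a_i$ are grouped at scale comparable to their mutual distances.
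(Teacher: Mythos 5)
First, a point of comparison: for this statement (the case $p=2=N-1$) the paper does \emph{not} redo Bethuel's construction. The hard implication is obtained by citing Theorem~\ref{thm1.4a} (the estimate $\inf_\varphi\|\nabla u-\nabla\varphi\|_{L^{N-1}}\le C\,L(u)^{1/(N-1)}$, taken from \cite{Bet:90}), and the easy implication follows from Proposition~\ref{prop2.1}; the paper's new good-ball/bad-ball strategy is confined to $N-1<p<N$ and explicitly does not cover $p=2$. So your plan is an attempt at Bethuel's original dipole-removal proof, not the paper's route, and your guess that the paper's ``different strategy'' enters at your step (5) is off target. Your ``only if'' direction is fine and coincides with the paper's (it is exactly the estimate \eqref{2.5}--\eqref{2.6}).

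The genuine gap is in step (4). It is false that $\cD'$-closeness of $\Jac{v}$ to $\Jac{u}=0$ forces the degrees $\deg g_i$ to vanish, even after clustering: a dipole $\omega_N(\delta_p-\delta_n)$ with $|p-n|=\eps$ is small in the dual norm of $\|\nabla\zeta\|_{L^\infty}$ yet both degrees are $\pm1$, and your ``separating test functions'' remark cannot separate $p$ from $n$ with $\|\nabla\zeta\|_{L^\infty}\le 1$ at cost better than $|p-n|$. What strong $W^{1,2}$-closeness actually yields, via $|L(v)-L(u)|\le\|D(v)-D(u)\|_{L^1}$, is that the minimal connection length $L(v)$ is small, i.e.\ the singularities can be paired --- allowing points of $\partial B^3$ as in \eqref{2.9}--\eqref{2.10}, since the total interior degree need not vanish --- by segments of small total length. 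The whole theorem is then concentrated in your step (5), which you invoke but do not prove, and where your scaling is off: the dipole construction gives Dirichlet \emph{energy} $O(|d|\,\ell)$, hence $\|\nabla\cdot\|_{L^2}=O\bigl((|d|\,\ell)^{1/2}\bigr)$, which is precisely the exponent $1/(N-1)$ in \eqref{1.10a}; writing $O(\ell|d|)$ for the $L^2$ norm of the gradient suggests the estimate has not been carried out. You would also need the boundary variant of the dipole removal (pushing a singularity out through $\partial B^3$) and the collision/ordering bookkeeping you yourself flag as the main obstacle. As it stands, the proposal correctly lists the ingredients of Bethuel's proof but mis-states the key reduction and leaves its quantitative core as a black box.
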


The counterpart of Theorem~\ref{thm1.2} in the case $2 < p < 3$ is the following:

\begin{thm}[Bethuel-Coron-Demengel-H\'elein~\cite{BetCorDemHel:91}]\label{thm1.3}
Let $u \in W^{1,p}(B^3;S^2)$, where $2 < p < 3$. Then, there exists a sequence $(\varphi_n) \subset C^\infty(\overline{B}\,\!^3;S^2)$ such that 
\begin{equation}\label{1.6}
\varphi_n \to u \quad \text{strongly in } W^{1,p}
\end{equation}
if, and only if,
\begin{equation}\label{1.7}
\Jac{u} = 0 \quad \text{in } \cD'(B^3).
\end{equation}
\end{thm}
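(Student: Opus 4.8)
The strategy is to treat the two implications separately, with the "only if" direction being a routine adaptation of the degree argument already sketched for the model map \eqref{1.3}, and the "if" direction being the substantial part. For the necessity of \eqref{1.7}: if $\varphi_n \to u$ strongly in $W^{1,p}$, then $D(\varphi_n) \to D(u)$ in $L^1(B^3;\RR^3)$ because $D$ is (up to lower-order terms controlled by $|u| = |\varphi_n| = 1$) a quadratic expression in the gradient and $\nabla \varphi_n \to \nabla u$ in $L^p$ with $p \geq 2$; hence $\Jac{\varphi_n} \to \Jac{u}$ in $\cD'(B^3)$, and since each $\varphi_n$ is smooth with values in $S^2$ we have $\Jac{\varphi_n} = 0$, giving \eqref{1.7}.

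For the sufficiency, the plan is to carry out a \emph{dipole-removal construction} adapted to the subcritical exponent $p$, following the philosophy indicated in the abstract (``motivated by the $W^{2,p}$-case''). First I would invoke Bethuel's approximation theory to produce, for each $u \in W^{1,p}(B^3;S^2)$ with $2<p<3$, a sequence of maps $\psi_n$ that are smooth on $B^3$ away from a \emph{finite} set of point singularities $\{a_1^n,\dots,a_{k_n}^n\}$, with each singularity carrying an integer degree $d_i^n = \deg(\psi_n|_{\partial B_\rho(a_i^n)})$, and with $\psi_n \to u$ strongly in $W^{1,p}$; this is the standard structure of $W^{1,p}$ maps into $S^2$ for $p$ in this range (the singular set is $0$-dimensional). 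The hypothesis $\Jac u = 0$ should then be used to show that the signed measure $\sum_i d_i^n \, \delta_{a_i^n}$, which converges to $\tfrac{3}{4\pi}\Jac u = 0$ in an appropriate weak topology, has total signed mass zero; after grouping, one may assume the positive and negative degrees are paired into dipoles $(p_j, n_j)$ with $|p_j - n_j|$ small. The core estimate is that a dipole of a single unit of degree with poles at distance $\ell$ apart can be erased by a map whose $W^{1,p}$-energy over a tube of radius comparable to $\ell$ around the connecting segment is bounded by $C\ell^{3-p}$ (this is where $p<3$ enters decisively, since $3-p>0$ makes small dipoles cheap). Summing over dipoles and over the small local energy near each singular point, one obtains a genuinely smooth map $\varphi_n$ with $\|\varphi_n - \psi_n\|_{W^{1,p}} \to 0$, hence $\varphi_n \to u$ in $W^{1,p}$; a final mollification and projection onto $S^2$ (licit since the maps are bounded and $S^2$-valued away from a negligible set, and become uniformly close to $S^2$) upgrades $C^\infty(B^3;S^2)$ to $C^\infty(\overline B\,\!^3; S^2)$, using that $u$ is already nice near $\partial B^3$ after a preliminary extension.

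The main obstacle I anticipate is the \emph{bookkeeping of the dipole cancellation in the limit}: translating the purely distributional statement $\Jac u = 0$ into a combinatorial pairing of the finitely many singularities of $\psi_n$ with controlled total connection length. One must show that the singularities can be matched so that the sum of the dipole lengths $\sum_j \ell_j$ (or rather $\sum_j \ell_j^{3-p}$) tends to $0$, which requires a compactness/lower-semicontinuity argument comparing the minimal-connection length $L(\psi_n)$ of Brezis-Coron-Lieb with the $W^{1,p}$-energy, together with the fact that $L(\psi_n) \to L(u) = 0$ precisely because $\Jac u = 0$. Controlling that the dipole-removal maps can be performed disjointly (non-overlapping tubes) and that the construction stays valued in $S^2$ throughout is the delicate technical point; I would handle the geometry by first scaling to make the singular points well-separated and then using the explicit degree-one correction map on each tube, as in the $W^{2,p}$ analogue that motivates this paper.
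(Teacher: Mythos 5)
Your ``only if'' direction is correct and is essentially the paper's argument: strong $W^{1,p}$-convergence with $p\ge 2$ gives $D(\varphi_n)\to D(u)$ in $L^1$, hence $\Jac{\varphi_n}\to\Jac{u}$ in $\cD'$, and $\Jac{\varphi_n}=0$ for smooth $S^2$-valued maps. The problem lies entirely in your ``if'' direction, and it sits exactly where you yourself locate ``the main obstacle'': the dipole bookkeeping does not close for $2<p<3$. Granting the (correct) heuristic that erasing a dipole of length $\ell$ costs $C\ell^{3-p}$ in $W^{1,p}$-energy, the total cost for $\psi_n$ is $\sum_j (\ell_j^n)^{3-p}$, whereas the only quantity that the hypothesis $\Jac{u}=0$ controls, via $L(\psi_n)\to L(u)=0$, is the minimal connection length $\sum_j \ell_j^n$. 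Since $0<3-p<1$, the map $t\mapsto t^{3-p}$ is concave, and $\sum_j \ell_j \to 0$ does \emph{not} imply $\sum_j \ell_j^{3-p}\to 0$: take $k_n$ dipoles each of length $L_n/k_n$; then $\sum_j \ell_j^{3-p} = L_n^{3-p}\,k_n^{\,p-2}$, which blows up if $k_n$ grows fast enough, even with $L_n\to 0$. Nothing in the approximation of $u$ by maps with finitely many singularities bounds the number $k_n$ of singularities, so the ``compactness/lower-semicontinuity argument'' you appeal to cannot exist in the form you need. This is precisely why the dipole-removal method is confined to $p=2$ (where the cost is $\sum_j\ell_j = L$, linear), a point the paper makes explicitly when it notes that Bethuel's proof of the $W^{1,2}$ case ``strongly relies on the fact that $p=2$.''

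The paper circumvents this by never pairing singularities at all. It covers $\Omega$ with balls of a single radius $r>4L(u)$ and, on each ball, uses a Fubini-type argument (Lemma~\ref{lemma7.1}) to find a sphere $\partial B_s$ avoiding all the segments of a minimal connection, on which $v$ therefore has degree zero; the singularity inside is then removed by radial extension plus a null-homotopy (bad balls) or by projection onto a small geodesic disk plus mollification (good balls). The cost of each modification is measured by the local $L^p$-energy of $\nabla u$ on a set whose measure is controlled by $L(u)\|\nabla u\|_{L^p}$, yielding the quantitative Theorem~\ref{thm3.1}, from which the implication $\Jac{u}=0\Rightarrow$ density follows by letting $L(u)=0$. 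If you want to keep a singularity-removal flavor, you must replace the per-dipole cost $\ell^{3-p}$ by a bound in terms of the energy of $u$ itself near the singularities, which is what the good/bad ball dichotomy accomplishes.
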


Although Theorem~\ref{thm1.3} is usually attributed to Bethuel~\cite{Bet:90}, such result was never mentioned in \cite{Bet:90}. Actually, Bethuel's proof of Theorem~\ref{thm1.2} uses a removing dipole technique and strongly relies on the fact that $p = 2$.  The proof of Theorem~\ref{thm1.3}, instead, is based on a different strategy used by Bethuel~\cite{Bet:91} in a subsequent work.

\medskip

More generally, we consider a smooth bounded domain  $\Omega \subset \RR^N$ with $N \geq 2$.
The distributional Jacobian still makes sense for maps in $W^{1,p}(\Omega; S^{N-1})$ as long as $p \ge N-1$. The counterparts of Theorems~\ref{thm1.2} and \ref{thm1.3} are presented in the theorem below: 

\begin{thm}[Bethuel~\cite{Bet:90}, Bethuel-Coron-Demengel-H\'elein~\cite{BetCorDemHel:91}]\label{thm1.4}
\quad\\
Let $u \in W^{1,p}(\Omega;S^{N-1})$, where $N-1 \leq p < N$. Then, there exists a sequence $(\varphi_n) \subset C^\infty(\overline{\Omega};S^{N-1})$ satisfying \eqref{1.6} if, and only if, 
\begin{equation}\label{1.10}
\Jac{u} = 0 \quad \text{in } \cD'(\Omega).
\end{equation}
\end{thm}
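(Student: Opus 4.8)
The statement is an equivalence, and the plan is to treat the two implications separately; the forward (necessity) direction is short, the reverse is the substantial one.

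\emph{Necessity.} Suppose $(\varphi_n)\subset C^\infty(\overline\Omega;S^{N-1})$ satisfies \eqref{1.6}. Each $\varphi_n$ takes values in a hypersurface, so $\nabla\varphi_n$ has rank at most $N-1$ a.e., the pointwise Jacobian determinant vanishes, and hence $\Jac{\varphi_n}=0$ in $\cD'(\Omega)$. It therefore suffices to show that $\Jac$ is continuous under strong $W^{1,p}$-convergence, and this is exactly where $p\ge N-1$ is used: the vector field $D(u)$ whose divergence defines $\Jac{u}$ has components that are products of $N-1$ weak partial derivatives of $u$, each in $L^p$, multiplied by a factor $u$ of modulus $1$, so $D(u)\in L^{p/(N-1)}(\Omega)\subset L^1\loc(\Omega)$ since $p/(N-1)\ge1$; moreover $u\mapsto D(u)$ is continuous from bounded subsets of $W^{1,p}(\Omega;S^{N-1})$ into $L^{p/(N-1)}$, because these $(N-1)$-fold products converge in $L^{p/(N-1)}$ whenever the derivatives converge in $L^p$. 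Hence $\Jac{\varphi_n}\to\Jac{u}$ in $\cD'(\Omega)$, which gives \eqref{1.10}. When $p=N-1$ one argues identically with $L^1$ in place of $L^{p/(N-1)}$; alternatively, as with $p=2$, $N=3$ above, one may slice on spheres and invoke continuity of the degree under VMO-convergence.

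\emph{Sufficiency, Step 1: reduction to homogeneous point singularities.} Now assume \eqref{1.10}. The plan for Step 1 uses only $N-1\le p<N$ and not the hypothesis: every $u\in W^{1,p}(\Omega;S^{N-1})$ is a strong $W^{1,p}$-limit of maps $v\in C^\infty(\overline\Omega\setminus\{a_1,\dots,a_k\};S^{N-1})$ that are homogeneous near each singular point, that is, $v(x)=\omega_i\bigl(\tfrac{x-a_i}{\lvert x-a_i\rvert}\bigr)$ near $a_i$ with $\omega_i\in C^\infty(S^{N-1};S^{N-1})$. This refines Bethuel's density of the class $\cR$ of maps smooth outside a finite set, and it is here that $p<N$ enters, since homogeneous maps of nonzero degree then lie in $W^{1,p}\loc$. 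The construction is the standard one: mollify $u$ and project onto $S^{N-1}$ away from a small set, fix a cubical grid of mesh $\delta$ in a position where the restriction of $u$ to the $(N-1)$-skeleton is controlled, open and adapt $u$ so as to be smooth on that skeleton, then extend cell by cell into the top-dimensional cells by homogeneous extension from their barycentres, the additional $W^{1,p}$-energy being bounded as $\delta\to0$ by a count of the cells whose boundary datum carries nonzero degree. For such a $v$ one has $\Jac{v}=c_N\sum_i(\deg{\omega_i})\,\delta_{a_i}$ with $c_N=\lvert S^{N-1}\rvert/N$, and by the continuity established above $\Jac{v}$ is $\cD'$-close to $\Jac{u}=0$; I would also retain the bound on the total charge $\sum_i\lvert\deg{\omega_i}\rvert$ that the energy estimate yields.

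\emph{Sufficiency, Step 2: cancelling the singularities.} It remains to replace $v$, whose Jacobian is a sum of weighted Dirac masses small in $\cD'(\Omega)$ with controlled mass, by a genuinely smooth map $W^{1,p}$-close to it. First split each singularity of degree $d_i$ into $\lvert d_i\rvert$ singularities of degree $\pm1$ at mutual distance $o(1)$, at negligible extra energy, so that only unit charges remain. Smallness of $\Jac{v}$ in $\cD'(\Omega)$ together with the mass bound forces the minimal connection of the charges — the least total length of a system of segments pairing positive to negative charges, or a charge to $\partial\Omega$, in the sense of Brezis--Coron--Lieb and its higher-dimensional analogue — to have some small length $L$. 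Around each such segment, of length $\ell$, replace $v$ inside a thin tube by a dipole-type map agreeing with $v$ on the boundary of the tube, smooth inside it, and carrying exactly the two opposite degrees that cancel the endpoint singularities; using that $v$ is homogeneous near its singular points, this costs $W^{1,p}$-energy $O(\ell^{N-p})$, a quantity finite precisely because $p<N$. Performing these surgeries on disjoint tubes removes every singularity while perturbing $v$ in $W^{1,p}$ by an amount controlled by $L$ and the number of dipoles, hence as small as desired. Letting the parameters of the two steps tend to $0$ yields the sequence $(\varphi_n)$; for $N=3$ this reproduces Theorems~\ref{thm1.2} and \ref{thm1.3}.

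\emph{Main difficulty.} The hard part is Step 2: converting the purely distributional statement ``$\Jac{v}$ is small in $\cD'(\Omega)$'' into the geometric statement ``the singularities are joined by short segments'' needs the charge bound from Step 1, and the dipole surgeries must be carried out at a scale simultaneously fine enough that the tubes are pairwise disjoint and do not absorb the energy $v$ already carries, yet coarse enough that each dipole has a usable aspect ratio and the transition layer matching $v$ on the tube boundary is cheap. Ensuring that Step 1 produces point singularities of \emph{controlled} total degree, rather than a number of charges blowing up as $\delta\to0$, and organizing Step 2 so that the errors of the various dipoles sum to something small, is the crux; by contrast the continuity of $\Jac$, the construction of a dipole, and the energy scaling $\ell^{N-p}$ are routine once $N-1\le p<N$ is in force.
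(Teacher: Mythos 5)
Your necessity argument coincides with the paper's: $\Jac{\varphi_n}=0$ for smooth sphere-valued maps, plus continuity of $v\mapsto D(v)$ from $W^{1,N-1}\cap L^\infty$ into $L^1$ (this is Proposition~\ref{prop2.1}, phrased there through $L(\cdot)$). For sufficiency you take a genuinely different route. The paper never removes dipoles: for $N-1<p<N$ it proves the quantitative Theorem~\ref{thm3.1} by covering \emph{all} of $\Omega$ with balls of radius $r>4L(v)$ and replacing $v$ on every ball, good or bad; the hypothesis $\Jac{u}=0$ enters only through the fact that $r$ may then be taken arbitrarily small, and the degree-zero sphere needed to fill a bad ball is produced by the Fubini selection of Lemma~\ref{lemma7.1} (a sphere $\partial B_s$ avoiding the segments of a near-minimal connection). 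Your plan instead localizes the surgery to tubes around the minimal connection itself, i.e.\ Bethuel's $H^1$ strategy; the paper explicitly cautions that this ``strongly relies on the fact that $p=2$''. The reason is the loss of scale invariance: for $p>N-1$ a dipole of length $\ell$ in a tube of radius $\rho$ costs $\sim\ell\,\rho^{N-1-p}$, which forces $\rho\sim\ell$, and then the two difficulties you list at the end are not side issues but the entire proof.

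Concretely, Step 2 has two gaps. First, the surgery does not cost $O(\ell^{N-p})$: what must be estimated is $\|\nabla w-\nabla v\|_{L^p(T)}$, which contains $\|\nabla v\|_{L^p(T)}$ and, through the extension of the boundary datum into the tube, a term controlled only by $\|\nabla v\|_{L^p(\partial T)}$. The former needs absolute continuity of $\int|\nabla v|^p$ (coupled correctly with $\ell$); the latter needs a Fubini selection of the tube boundary, exactly as in \eqref{7.2} --- without it, ``agreeing with $v$ on the boundary of the tube'' carries no energy bound at all. Second, disjointness: segments of a minimal connection can be arbitrarily close, and tubes of aspect ratio $1$ around them overlap; one must merge overlapping tubes into regions whose boundaries still carry zero total degree and fill those by a null-homotopy --- at which point the construction has essentially become the paper's bad-ball Proposition~\ref{prop6.1}. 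A smaller point: the inference ``$\Jac{v}$ small in $\cD'$ plus mass bound implies short minimal connection'' is backwards --- $L(v)\to 0$ is precisely convergence of $\Jac{v}$ in the dual Lipschitz norm and follows from Proposition~\ref{prop2.1} with no mass bound; the bound on the number of charges is needed only to control $\sum_i\ell_i^{N-p}$, and you are right that the grid construction supplies it. So the architecture is viable and genuinely different from the paper's, but Step 2 as written is a program rather than a proof, and once its gaps are repaired it converges to the covering argument the paper actually uses.
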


In addition, one can estimate the $W^{1,N-1}$-distance between any given map $u \in W^{1, N-1}(\Omega; S^{N-1})$ and the class of smooth maps in terms of $L(u)$, the length of the minimal connection of $u$ (see definition \eqref{2.4} below):

\begin{thm}[Bethuel~\cite{Bet:90}]\label{thm1.4a}
If $u \in W^{1, N-1}(\Omega;S^{N-1})$, then
\begin{equation}\label{1.10a}
\inf{\Big\{ \|\nabla u - \nabla\varphi\|_{L^{N-1}}\ ;\ \varphi \in C^\infty(\overline{\Omega};S^{N-1})  \Big\}} \leq C \big(L(u)\big)^{\frac{1}{N-1}}.
\end{equation}
\end{thm}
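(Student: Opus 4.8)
The plan is to reduce the general case to a dipole-type estimate and then sum over a covering. First I would recall that when $u\in W^{1,N-1}(\Omega;S^{N-1})$, the distributional Jacobian $\Jac u$, while not necessarily zero, is controlled by the minimal connection: by the Brezis--Coron--Lieb structure theorem (or its $W^{1,N-1}$ counterpart, see \eqref{2.4}) one can represent the "topological charge" of $u$ by finitely many (or countably many, with summable length) pairs of positive and negative points $(P_i,N_i)$ with $\sum_i |P_i - N_i|$ arbitrarily close to $L(u)$. The key local object is the \emph{dipole construction}: given two nearby points $P,N$ at distance $\ell$, there is a map on a small tube around the segment $[P,N]$ which carries a single unit of degree concentrated at the endpoints and whose $W^{1,N-1}$-energy is $O(\ell)$. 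The heart of the argument is to show that modifying $u$ on such a tube so as to cancel one dipole costs only $C\ell$ in $W^{1,N-1}$-seminorm — this is the substance of the "dipole removal" lemma adapted from \cite{Bet:90}.

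The steps I would carry out, in order, are: \emph{(i)} fix $\eps>0$ and choose a connection $\{(P_i,N_i)\}$ with $\sum_i |P_i-N_i| \le L(u)+\eps$; \emph{(ii)} by a Fubini/slicing argument find, for each $i$, a thin tube $T_i$ around $[P_i,N_i]$ on the boundary of which $u$ restricts to a Sobolev map of the correct trace and whose energy on $T_i$ is small (since $|T_i|\to 0$ and $\nabla u\in L^{N-1}$, the energy of $u$ on $\bigcup T_i$ is $o(1)$ as the tubes shrink); \emph{(iii)} replace $u$ inside each $T_i$ by the explicit dipole map matching the boundary trace and absorbing the topological charge, gaining $\|\nabla u_{\mathrm{new}} - \nabla u_{\mathrm{old}}\|^{N-1}_{L^{N-1}(T_i)} \le C|P_i - N_i|$ for the new energy plus the (small) old energy; \emph{(iv)} the resulting map $\wt u$ now has $\Jac{\wt u}=0$, hence by Theorem~\ref{thm1.4} (with $p=N-1$) it can be strongly approximated in $W^{1,N-1}$ by maps in $C^\infty(\overline\Omega;S^{N-1})$; \emph{(v)} combine the estimates: $\dist_{W^{1,N-1}}(u,C^\infty) \le \|\nabla u-\nabla \wt u\|_{L^{N-1}} \le \big(\sum_i C|P_i-N_i| + o(1)\big)^{1/(N-1)} \le \big(C(L(u)+\eps)+o(1)\big)^{1/(N-1)}$, and let $\eps\to 0$ and the tubes shrink.

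I expect the main obstacle to be step \emph{(iii)}: constructing the dipole replacement so that it simultaneously (a) agrees with the prescribed trace of $u$ on $\partial T_i$, which is an \emph{arbitrary} Sobolev trace rather than a constant, (b) carries exactly the degree needed to kill the local contribution to $\Jac u$, and (c) has energy bounded by $C\ell$ uniformly in the geometry of the tube. Matching an arbitrary boundary datum typically forces an interpolation layer between the generic trace of $u$ and a controlled map (e.g.\ a constant or a standard $0$-homogeneous map) near the ends of the tube, and one must check that this interpolation does not blow up the energy — this is where the restriction $p<N$ and the precise scaling of the dipole are used. A secondary technical point is handling the case where the minimal connection has infinitely many dipoles: one orders them by decreasing length and truncates, showing the tail contributes negligibly both to the length sum and to the energy, so that only finitely many surgeries are needed up to an arbitrarily small error.
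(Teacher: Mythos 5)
First, a point of comparison: the paper does not prove Theorem~\ref{thm1.4a} at all. It is quoted from Bethuel~\cite{Bet:90} and used as a black box in Section~\ref{sec3} to handle the endpoint $p=N-1$ of Theorem~\ref{thm1.4}; the authors' own quantitative result is Theorem~\ref{thm3.1}, which covers only $N-1<p<N$ and uses a good-ball/bad-ball decomposition rather than dipoles. Your sketch is therefore not ``the paper's proof'' but a reconstruction of Bethuel's original dipole-removal argument, and as a strategy it is the right one: approximate $u$ by $v\in\cR^{1,N-1}(\Omega)$ (Theorem~\ref{thm2.1}), pair the singularities into dipoles realizing the minimal connection via \eqref{2.10}, and remove each dipole in a thin tube at an energy cost proportional to its length, so that the total cost is $C\,L(u)$ for the $(N-1)$-st power of the seminorm, which is exactly the exponent $\tfrac{1}{N-1}$ in \eqref{1.10a}.

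There are, however, two genuine gaps. The first is the one you yourself flag: step \emph{(iii)}, the dipole-removal lemma with cost $C\ell$ per dipole of length $\ell$, matching the (smooth but arbitrary) trace of $v$ on the tube boundary and handling dipoles of higher degree, singularities connected to $\partial\Omega$, and segments that may intersect other singularities or each other, \emph{is} the theorem; asserting it leaves the entire analytic content unproved. The second is a circularity you do not flag: your step \emph{(iv)} invokes Theorem~\ref{thm1.4} at $p=N-1$ to approximate the corrected map $\wt u$ with $\Jac{\wt u}=0$, but within this paper that case of Theorem~\ref{thm1.4} is itself deduced \emph{from} Theorem~\ref{thm1.4a}. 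To avoid the circle you must either cite Bethuel's density theorem (Theorem~\ref{thm1.2} and its $N$-dimensional analogue) as an independent ingredient, or--better, and closer to \cite{Bet:90}--perform the dipole removal directly on the $\cR^{1,N-1}$-approximant $v$, in which case the surgery eliminates all singularities and produces a map that is already smooth (up to a final mollification), so that no separate density theorem for $\Jac=0$ maps is needed. The remaining points you raise (truncating a countable connection, the $o(1)$ energy of $u$ on shrinking tubes) are handled correctly in your outline.
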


\medskip

The main goal of this paper is to use a different strategy from \cite{Bet:90,BetCorDemHel:91} to prove Theorem~\ref{thm1.4} for $N-1 < p < N$. An advantage of our approach is that it can be adapted to higher order Sobolev spaces and in particular to $W^{2,p}$; see \cite{BouPonVan:08}. 
As a by-product we also prove the following new counterpart of Theorem~\ref{thm1.4a} when $N-1 < p < N$:

\begin{thm}\label{thm3.1}
If $N-1 < p < N$, then for every $u \in W^{1,p}(\Omega;S^{N-1})$,
\begin{equation}\label{3.2}
\inf{\Big\{ \|u - \varphi\|_{W^{1,p}}\ ;\ \varphi \in C^\infty(\overline{\Omega};S^{N-1})  \Big\}} \leq C \|\nabla u\|_{L^p(A)},
\end{equation}
for some open set $A \subset \Omega$ such that
\begin{equation}\label{3.3}
|A|^{1/p} \leq C \, L(u) \|\nabla u\|_{L^p(\Omega)}.
\end{equation}
\end{thm}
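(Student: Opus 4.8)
The plan is to exploit the fact that when $\Jac{u}$ happens to vanish one can approximate $u$ by smooth maps with a quantitative gain, and to handle the general case by first ``killing'' the distributional Jacobian inside a small set $A$, then applying such a quantitative approximation outside $A$. More precisely, I would proceed as follows. First, recall (from the theory behind Theorem~\ref{thm1.4a} and its $L^p$-analogue, which underlies the present paper's approach) that $\Jac{u}$ for $u \in W^{1,p}(\Omega;S^{N-1})$ is a finite measure supported on a set of zero $p$-capacity, and that its total mass is controlled by $L(u)$, the length of the minimal connection; moreover the ``singular'' part of $\nabla u$ responsible for the nontrivial Jacobian is concentrated, in the $L^p$-sense, near $\supp \Jac{u}$. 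The first step is therefore to choose a small open neighborhood $A$ of (a suitable covering of) the topological singularities of $u$, i.e.\@ of the support of $\Jac{u}$, such that $\|\nabla u\|_{L^p(A)}$ is as small as we like while $|A|$ stays controlled. The Vitali-type covering argument that produces $A$ as a union of balls centered at the singularities, with radii summing appropriately, is where the bound \eqref{3.3} on $|A|^{1/p}$ comes from: the number and sizes of the balls are dictated by the mass of $\Jac{u}$, hence by $L(u)$, while each ball must also be large enough relative to the local concentration of $\nabla u$, which brings in the factor $\|\nabla u\|_{L^p(\Omega)}$ via a Chebyshev/absolute-continuity argument applied to the measure $|\nabla u|^p\, dx$.

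The second step is the \emph{local} construction on $A$: replace $u|_A$ by a map $v$ that agrees with $u$ on $\partial A$ (so that gluing is possible), lies in $W^{1,p}$, has vanishing distributional Jacobian on all of $\Omega$, and satisfies $\|u-v\|_{W^{1,p}(A)} \leq C\|\nabla u\|_{L^p(A)}$. On each ball $B \subset A$ one can do this by a homogeneous-type extension: take the boundary trace $u|_{\partial B}$ (which is in $W^{1,p}(\partial B)$ for a.e.\@ radius, hence by Morrey continuous since $\dim \partial B = N-1$ and $p>N-1$), note that since $p<N$ such a boundary map need not extend to $S^{N-1}$, so instead use the radial extension into $\RR^N$ and project, or rather use that $u|_{\partial B}$, being $W^{1,p}$ with $p>N-1=\dim S^{N-1}$... is already homotopic to a constant after perturbation only when its degree vanishes — this is precisely the point at which one must use that the balls were chosen so that the piece of $\Jac{u}$ inside is reabsorbed. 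The cleanest route is to mimic the ``filling a dipole'' construction of \cite{Bet:90,Bet:91}: connect the singularities in pairs (or to $\partial\Omega$) along the minimal connection, and along a thin tube around this connection insert a map that cancels the topological charge; the $W^{1,p}$-cost of this insertion is $\leq C\|\nabla u\|_{L^p(A)}$ precisely because the tube sits inside $A$ and $u$ itself already carries comparable energy there. After this surgery, the modified map $\tilde u$ coincides with $u$ outside $A$, satisfies $\|u-\tilde u\|_{W^{1,p}} \le C\|\nabla u\|_{L^p(A)}$, and has $\Jac{\tilde u}=0$ in $\cD'(\Omega)$.

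The third step is to invoke Theorem~\ref{thm1.4} (the $N-1<p<N$ case, which the paper is proving by its own method): since $\Jac{\tilde u}=0$, there is a sequence $(\varphi_n)\subset C^\infty(\overline\Omega;S^{N-1})$ with $\varphi_n\to\tilde u$ in $W^{1,p}$. Hence for $n$ large, $\|u-\varphi_n\|_{W^{1,p}} \le \|u-\tilde u\|_{W^{1,p}} + \|\tilde u-\varphi_n\|_{W^{1,p}} \le 2C\|\nabla u\|_{L^p(A)}$, which is \eqref{3.2}. Combining with the estimate \eqref{3.3} on $|A|$ from Step~1 finishes the proof.

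I expect the main obstacle to be Step~2: producing a neighborhood $A$ that is \emph{simultaneously} small enough in $L^p$-energy that \eqref{3.2} is meaningful, small enough in measure that \eqref{3.3} holds, and yet large and ``thick'' enough around the minimal connection that the dipole-cancelling surgery can actually be carried out with cost controlled by $\|\nabla u\|_{L^p(A)}$ rather than by something larger. Balancing these three requirements — which pull the radii of the covering balls in opposite directions — is the delicate quantitative heart of the argument, and is presumably where the precise exponents in \eqref{3.3} come from; the rest (Morrey on spheres, gluing across $\partial A$, citing Theorem~\ref{thm1.4}) should be routine.
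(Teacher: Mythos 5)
Your Step~2 is the gap, and it is not merely ``delicate'': it is the step that the removing-dipole method cannot deliver when $p>N-1$, which is precisely why this paper (and \cite{BetCorDemHel:91}) abandons that method. Inserting a dipole of length $\ell$ along a tube of radius $\eps$ costs, in $p$-energy, roughly $\ell\,\eps^{\,N-1-p}$: this is bounded independently of $\eps$ only when $p\le N-1$, and for $p>N-1$ it blows up as $\eps\to 0$. Your claim that this cost is $\le C\|\nabla u\|_{L^p(A)}$ ``because $u$ itself already carries comparable energy there'' has no justification --- along the minimal connection between two distant singularities $u$ may be essentially constant, so the tube carries almost none of the energy of $u$, while the inserted map must carry at least $c\,\ell\,\eps^{\,N-1-p}$. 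Choosing $\eps$ large enough to tame this cost then inflates $|A|$ in a way that does not reproduce \eqref{3.3}. Two further problems: (i) your Step~1 presumes $\Jac u$ is a finite measure with small support, but for a general $u\in W^{1,p}$ with $p<N$ the Jacobian is only a countable sum of dipoles (Theorem~\ref{thm2.2}), possibly of infinite total mass and with $\supp\Jac u$ dense in $\Omega$; one must first reduce to $\cR^{1,p}(\Omega)$ via Bethuel--Zheng density and the continuity of $L$, as the paper does. (ii) Your Step~3 invokes Theorem~\ref{thm1.4} for $N-1<p<N$, but in this paper that implication is \emph{deduced from} Theorem~\ref{thm3.1}, so within the paper's architecture the argument is circular (it is not absolutely circular, since Theorem~\ref{thm1.4} is known from \cite{BetCorDemHel:91}, but it defeats the purpose of giving a new proof).

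For contrast, the paper's mechanism is quite different and avoids dipoles entirely. It covers $\Omega$ by balls of a \emph{single} radius $r\approx 4L(u)$ with bounded overlap and classifies them as good or bad according to \eqref{1.11}. On each ball a Fubini argument (Lemma~\ref{lemma7.1}) finds a sphere $\partial B_s$ avoiding the segments of a minimal connection, hence with $\deg(v|_{\partial B_s})=0$; on bad balls one replaces $v$ inside by the homogeneous extension of $v|_{\partial B_s}$ and fills the resulting point singularity using a smooth null-homotopy, the measure bound \eqref{3.3} coming from the badness condition itself ($|B_{2r}|^{1/p}\le Cr\|\nabla v\|_{L^p(B_{2r})}$); on good balls one projects the image into a small geodesic disk and mollifies, with $A$ the set where $v$ leaves that disk, controlled by Chebyshev and Poincar\'e. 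The factor $\|\nabla u\|_{L^p(\Omega)}$ in \eqref{3.3} thus enters through the energy threshold $\lambda r^{N-p}$, not through a covering of $\supp\Jac u$ adapted to the singularities as in your sketch.
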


\bigskip
We now explain the main idea in the proof of Theorem~\ref{thm3.1}. We first cover the domain $\Omega$ with finitely many balls $\big( B_r(x_i) \big)_{i \in I}$ and then we modify $u$ on $B_r(x_i)$ according to whether
\begin{equation}\label{1.11}
\int\limits_{B_{2r}(x_i)} |\nabla u|^p < \lambda r^{N-p} \quad \text{or} \quad \int\limits_{B_{2r}(x_i)} |\nabla u|^p \ge \lambda r^{N-p},
\end{equation}
for some parameter $\lambda > 0$ suitably chosen. In the first case, we call $B_r(x_i)$ a \emph{good ball}, otherwise $B_r(x_i)$ is a \emph{bad ball}. This type of condition was introduced in a remarkable work of Bethuel~\cite{Bet:90}.

If $B_r(x_i)$ is a good ball and $\lambda > 0$ is sufficiently small, then most of the values of $u(B_r(x_i))$ lie in a small geodesic disk of $S^{N-1}$. In this case, a projection into this disk and a convolution allow us to replace $u$ on $B_r(x_i)$ by a smooth map. In contrast, if $B_r(x_i)$ is a bad ball, then $u|_{\partial B_r(x_i)}$ need not be contained in a small geodesic disk, but if the radius $r$ is larger than the length of the minimal connection $L(u)$, we can slightly decrease the radius $r$ if necessary so that $u|_{\partial B_r(x_i)}$ is homotopic to a constant. In this case, using an idea of Bethuel-Zheng~\cite{BetZhe:88}, it is possible to use such homotopy to replace $u$ by a smooth map, while keeping the energy on $B_r(x_i)$ under control. 

The detailed constructions on good and bad balls are presented in Sections~\ref{sec6} and \ref{sec5} below. In the next section, we define the distributional Jacobian for maps in $W^{1,p}(\Omega; S^{N-1})$ with $p \ge N-1$, and we explain some of its main properties. In Sections~\ref{sec4} and \ref{sec3} we prove Theorems~\ref{thm1.4} and \ref{thm3.1}.


\section{The distributional Jacobian}\label{sec2}

Let $N \geq 2$ and $\Omega \subset \RR^N$ be a smooth bounded domain.
Given a map $u \in W^{1,N-1}(\Omega; \RR^N) \cap L^\infty$, we consider the $L^1$-vector field
\begin{equation}\label{2.1}
D(u) = (D_1, \ldots, D_N),
\end{equation}
where
\begin{equation}\label{2.2}
D_j = \det{\big[ u_{x_1}, \ldots, u_{x_{j-1}}, u, u_{x_{j+1}}, \ldots, u_{x_N} \big]}.
\end{equation}
We then associate to the map $u$ the distribution
\begin{equation}\label{2.3}
\Jac{u} = \frac{1}{N} \div{D(u)};
\end{equation}
more precisely,
$$
\langle \Jac{u}, \zeta \rangle = - \frac{1}{N} \sum_{j = 1}^N{ \int_\Omega D_j \zeta_{x_j}} \quad \forall \zeta \in C_0^\infty(\Omega).
$$
Given $u \in W^{1, N-1}(\Omega; S^{N-1})$, we define the length of the minimal connection of $u$ as
\begin{equation}\label{2.4}
L(u) = \frac{1}{\omega_N} \sup_{
\substack{
\zeta \in C_0^\infty(\Omega)\\
\|\nabla\zeta\|_{L^\infty} \leq 1
}
}{\langle \Jac{u}, \zeta \rangle},
\end{equation}
where $\omega_N$ denotes the measure of the unit ball $B_1 \subset \RR^N$.
The reason for calling $L(u)$ the length of the minimal connection of $u$ comes from the geometric meaning of $L(u)$ (see equations \eqref{2.10} and \eqref{2.13} below).
If $u$ is smooth, then 
$$
\Jac{u} = \det{\big[ u_{x_1}, \ldots, u_{x_N} \big]} = 0.
$$
More generally, if $u$ is smooth except at finitely many points $a_1, \ldots, a_k \in \Omega$, then (see e.g.\@ \cite{BreCorLie:86})
\begin{equation}\label{2.8}
\Jac{u} = \omega_N \sum_{i=1}^k{d_i \delta_{a_i}} \quad \text{in } \cD'(\Omega),
\end{equation}
where $d_i = \deg{(u,a_i)}$ denotes the degree of $u$ with respect to any small sphere centered at $a_i$. Since we are not making any additional assumption about $u$ on $\partial\Omega$, it may happen that $\sum\limits_{i=1}^k{d_i} \neq 0$. However, by using points from $\partial\Omega$ one can always rewrite \eqref{2.8} as
\begin{equation}\label{2.9}
\Jac{u} = \omega_N \sum_{i=1}^{\tilde k}{(\delta_{p_i} - \delta_{n_i} )} \quad \text{in } \cD'(\Omega),
\end{equation}
where $p_1, \ldots, p_{\tilde k}, n_1, \ldots, n_{\tilde k} \in \overline\Omega$ (note that points on $\partial\Omega$ are harmless from the point of view of test functions with compact support in $\Omega$). 
In particular, one always has $L(u) \le \sum\limits_{i=1}^{\tilde k}{|p_i - n_i|}$. Brezis-Coron-Lieb~\cite{BreCorLie:86} proved that these points can be chosen and rearranged so that
\begin{equation}\label{2.10}
L(u) = \sum_{i=1}^{\tilde k}{|p_i - n_i|}.
\end{equation}

For a general map $u \in W^{1,N-1}(\Omega;S^{N-1})$, not necessarily with finitely many singularities, one has the following characterizations of $\Jac{u}$ and $L(u)$:

\begin{thm}[Bourgain-Brezis-Mironescu~\cite{BouBreMir:04}]\label{thm2.2}
Given $u \in W^{1,N-1}(\Omega;S^{N-1})$,\\ there exist sequences of points $(p_i), (n_i) \subset \overline\Omega$ such that 
$\sum\limits_{i=1}^\infty{|p_i - n_i|} < \infty$ and
\begin{equation}\label{2.12}
\Jac{u} = \omega_N \sum_{i=1}^{\infty}{(\delta_{p_i} - \delta_{n_i} )} \quad \text{in } \cD'(\Omega).
\end{equation}
Moreover,
\begin{equation}\label{2.13}
L(u) = \inf{\left\{ \sum_{i=1}^{\infty}{|p_i - n_i|}\ ;\ (p_i), (n_i) \subset \overline\Omega \text{ satisfy \eqref{2.12}} \right\}}.
\end{equation}
\end{thm}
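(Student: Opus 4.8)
The plan is to deduce both assertions from the known structure of the distributional Jacobian for maps with finitely many singularities together with a density/approximation argument. First, I would invoke the standard fact that maps $u_k$ that are smooth away from finitely many points are dense in $W^{1,N-1}(\Omega;S^{N-1})$ for the strong topology (this is essentially Bethuel's approximation result, valid in the critical exponent $p=N-1$). For each such $u_k$ we have from \eqref{2.9}--\eqref{2.10} that $\Jac u_k = \omega_N \sum_{i=1}^{\tilde k_k}(\delta_{p_i^k}-\delta_{n_i^k})$ with $\sum_i |p_i^k-n_i^k| = L(u_k)$. The next point is continuity: since $D(u)$ depends continuously (from $W^{1,N-1}\cap L^\infty$ into $L^1$) on $u$ via the multilinear algebraic expression \eqref{2.2}, and since $u_k \to u$ with $\|u_k\|_\infty$, $\|u\|_\infty$ uniformly bounded by $1$, one gets $D(u_k)\to D(u)$ in $L^1$, hence $\Jac u_k \to \Jac u$ in $\cD'(\Omega)$; taking the supremum over $\|\nabla\zeta\|_{L^\infty}\le 1$ in \eqref{2.4}, $L(u_k)\to L(u)$ as well (lower semicontinuity is immediate from the sup definition, and the upper bound follows from uniform equicontinuity of $\zeta\mapsto \langle\Jac u_k,\zeta\rangle$ on that class).

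With these in hand, I would pass to the limit in the finite representations. The collection of measures $\mu_k := \sum_i(\delta_{p_i^k}-\delta_{n_i^k})$ has uniformly bounded ``optimal transport'' cost $L(u_k)\to L(u)<\infty$; writing $\mu_k = \sigma_k^+ - \sigma_k^-$ as a difference of the positive/negative clusters and normalizing, one extracts weakly-$*$ convergent subsequences of the $\sigma_k^\pm$ (viewed as measures on the compact set $\overline\Omega$) whose limits $\sigma^\pm$ satisfy $\Jac u = \omega_N(\sigma^+ - \sigma^-)$. The key structural input is then that $\sigma^+$ and $\sigma^-$ are \emph{countable sums of unit Dirac masses} with finite mutual transport cost: this is where one uses that each $u_k$ has \emph{integer} degrees, so $\sigma_k^\pm$ are sums of (possibly coinciding) unit atoms, and a transport/Banach-space duality argument (the Kantorovich--Rubinstein duality identifying the sup over $1$-Lipschitz $\zeta$ with the minimal-connection length) forces the limiting measures to admit a representation $\sum_{i=1}^\infty(\delta_{p_i}-\delta_{n_i})$ with $\sum_i|p_i-n_i|<\infty$. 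This yields \eqref{2.12}, and the infimum in \eqref{2.13} is $\le L(u)$ by construction; the reverse inequality $L(u)\le \sum_i|p_i-n_i|$ for \emph{any} admissible representation is immediate by testing \eqref{2.4} against a $1$-Lipschitz $\zeta$ and using $|\zeta(p_i)-\zeta(n_i)|\le |p_i-n_i|$.

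The main obstacle I expect is the structural step just described: ensuring that the weak-$*$ limit of the signed atomic measures retains an \emph{atomic} (countable-sum-of-unit-Diracs) form rather than smearing into a diffuse measure. A priori, a sequence of sums of unit Diracs can converge weakly-$*$ to something non-atomic (e.g. to Lebesgue measure). The resolution must use the additional control coming from the \emph{finiteness of the transport cost} $L(u_k)$ uniformly, which prevents the positive and negative parts from separating, and from the fact that $\Jac u$ is itself a fixed distribution with finite minimal-connection length. Concretely, one argues that $L(u) = \sup_{\|\nabla\zeta\|_\infty\le1}\langle\Jac u/\omega_N,\zeta\rangle$ is, by Kantorovich--Rubinstein duality on $\overline\Omega$, exactly the Wasserstein-$1$ distance between the positive and negative parts of the measure $\Jac u/\omega_N$ once one knows those parts are $\sigma$-finite with integer ``mass density'' — and the integrality is inherited in the limit because it is a closed condition under the relevant convergence (any weak-$*$ limit of integer-valued $0$-dimensional rectifiable currents with bounded mass and boundary mass is again such a current, by the closure theorem for rectifiable currents / by Federer--Fleming). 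I would cite \cite{BreCorLie:86} for the finite case and \cite{BouBreMir:04} itself (or reprove via the transport/currents picture) for the passage to the limit, keeping the argument self-contained up to these classical closure facts.
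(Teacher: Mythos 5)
The paper does not actually prove Theorem~\ref{thm2.2}: it is quoted from Bourgain--Brezis--Mironescu \cite{BouBreMir:04} (see also \cite{Pon:03d}), so there is no in-paper argument to compare against. Your skeleton --- approximate $u$ by maps in $\cR^{1,N-1}(\Omega)$ (Theorem~\ref{thm2.1}), use the Brezis--Coron--Lieb finite representation \eqref{2.9}--\eqref{2.10} for each approximant, pass to the limit using $D(u_k)\to D(u)$ in $L^1$ and $L(u_k)\to L(u)$ (Proposition~\ref{prop2.1}) --- is the standard route, and the easy inequality $L(u)\le\sum_i|p_i-n_i|$ for any admissible representation is correctly handled. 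The problem is that the decisive closure step is not justified by the tools you invoke.

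Concretely: (a) you propose to extract weak-$*$ limits of the ``clusters'' $\sigma_k^{\pm}$ viewed as measures on $\overline\Omega$, but their total masses equal the number of atoms $\tilde k_k$ (with multiplicity), and this is \emph{not} uniformly bounded along the approximating sequence --- only the cost $\sum_i|p_i^k-n_i^k|=L(u_k)$ is. A dipole $p_i^k,n_i^k$ at mutual distance $2^{-k}$ contributes almost nothing to $L(u_k)$ yet adds mass $2$ to $|\sigma_k^+|+|\sigma_k^-|$; hence there is no weak-$*$ compactness for $\sigma_k^{\pm}$, and ``normalizing'' destroys exactly the integrality you later need. (b) For the same reason the Federer--Fleming compactness/closure theorem for integral currents does not apply: it requires uniform bounds on mass \emph{and} boundary mass, whereas here the natural $1$-current carried by the segments $[n_i^k,p_i^k]$ has bounded mass but its boundary $\mu_k$ has unbounded mass. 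The correct tool is the closure theorem for flat chains with coefficients in $\ZZ$ under a bound on the flat norm alone (Fleming, White), or, more elementarily, the direct dyadic regrouping argument of \cite{BouBreMir:04} and \cite{Pon:03d}, which shows: if $T_k=\sum_i(\delta_{p_i^k}-\delta_{n_i^k})$ with $\sup_k\sum_i|p_i^k-n_i^k|<\infty$ and $\sup_{\|\nabla\zeta\|_{L^\infty}\le1}\langle T_k-T,\zeta\rangle\to0$, then $T$ admits a countable representation of cost at most $\liminf_k\sum_i|p_i^k-n_i^k|$, up to an arbitrarily small error. Without this lemma --- which is essentially the abstract content of the theorem itself --- your argument is circular at the key point. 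Finally, your appeal to Kantorovich--Rubinstein needs care: since test functions are compactly supported in $\Omega$, points of $\partial\Omega$ are free sources and sinks, so $L(u)$ is a boundary-relative transport cost, not the plain Wasserstein-$1$ distance between the positive and negative parts of $\Jac{u}/\omega_N$.
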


In contrast with the case of finitely many singularities, the infimum in \eqref{2.13} need not be achieved in general; see \cite{Pon:03d}.

\medskip
We end this section by showing the well-known fact that $L(u)$ is continuous with respect to the strong convergence in $W^{1,N-1}(\Omega; S^{N-1})$:

\begin{prop}\label{prop2.1}
Let $(u_n) \subset W^{1,N-1}(\Omega; S^{N-1})$ be a sequence such that $u_n \to u$ in $W^{1,N-1}$.
Then,
\begin{equation}
L(u_n)\to L(u).
\end{equation}
\end{prop}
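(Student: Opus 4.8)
The plan is to prove continuity of $L$ by exploiting the fact that $L(u)$ is, up to the constant $\omega_N$, the dual norm of $\Jac u$ acting on the closed unit ball of the seminorm $\|\nabla\zeta\|_{L^\infty}$ on $C_0^\infty(\Omega)$. Continuity will follow once I show that $\Jac{u_n} \to \Jac u$ in an appropriate sense, quantitatively. Concretely, since $L(u_n) - L(u) = \frac{1}{\omega_N}\big(\sup_\zeta \langle \Jac{u_n},\zeta\rangle - \sup_\zeta \langle \Jac u,\zeta\rangle\big)$ and the two suprema are over the same set, the difference is bounded in absolute value by $\frac{1}{\omega_N}\sup_{\|\nabla\zeta\|_{L^\infty}\le 1}|\langle \Jac{u_n} - \Jac u, \zeta\rangle|$. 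So it suffices to estimate this last quantity and show it tends to $0$.

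The key step is therefore the estimate
\[
\sup_{\substack{\zeta \in C_0^\infty(\Omega)\\ \|\nabla\zeta\|_{L^\infty}\le 1}} |\langle \Jac{u_n} - \Jac u, \zeta\rangle| \le C\,\big(\|\nabla u_n\|_{L^{N-1}} + \|\nabla u\|_{L^{N-1}}\big)^{N-2}\,\|u_n - u\|_{W^{1,N-1}}.
\]
To get this, I would write $\langle \Jac{u_n} - \Jac u, \zeta\rangle = -\frac{1}{N}\int_\Omega \big(D(u_n) - D(u)\big)\cdot\nabla\zeta$, so that with $\|\nabla\zeta\|_{L^\infty}\le 1$ it is bounded by $\frac{1}{N}\|D(u_n) - D(u)\|_{L^1}$. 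Now $D(u)$ is multilinear in $u$ and its $N-1$ first derivatives (it is a sum of $N\times N$ determinants with one column being $u$ itself, which is bounded by $1$ since $u$ takes values in $S^{N-1}$, and $N-1$ columns being the $u_{x_j}$). A standard telescoping/multilinearity argument writes $D(u_n) - D(u)$ as a sum of terms each involving exactly one factor of the form $(u_n - u)$ or $(\partial_{x_j} u_n - \partial_{x_j} u)$ and the remaining factors being derivatives of $u_n$ or $u$; applying the generalized Hölder inequality with exponents summing appropriately (one factor in $L^{N-1}$, the $N-2$ remaining derivative factors in $L^{N-1}$, and using $|u|, |u_n|\le 1$ for any $u$-type factor without derivative) bounds each term by $C(\|\nabla u_n\|_{L^{N-1}} + \|\nabla u\|_{L^{N-1}})^{N-2}\|u_n-u\|_{W^{1,N-1}}$. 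Since $u_n \to u$ in $W^{1,N-1}$, the norms $\|\nabla u_n\|_{L^{N-1}}$ stay bounded and $\|u_n - u\|_{W^{1,N-1}} \to 0$, so the whole expression goes to $0$, giving $L(u_n)\to L(u)$.

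The main obstacle, or at least the only point requiring care, is the multilinear algebra in controlling $\|D(u_n) - D(u)\|_{L^1}$: one must set up the telescoping decomposition of the determinants cleanly, keep track of which factor carries the difference, and invoke Hölder with the right tuple of exponents $\big(\tfrac{N-1}{N-2}$ for the lone "bounded-or-difference" slot paired against $N-1$ for each of the other $N-2$ derivative slots, or simply $L^{N-1}$ throughout after using the $L^\infty$ bound on one slot$\big)$. For $N=2$ this is trivial ($D(u) = (u_2 u'_{x_1} - \ldots)$ type expression, linear in $\nabla u$), and for general $N$ it is routine but must be written carefully. Everything else — the duality reduction and the passage to the limit — is immediate. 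I expect the proof to be short: a couple of displayed inequalities plus a sentence invoking boundedness of $\|\nabla u_n\|_{L^{N-1}}$.
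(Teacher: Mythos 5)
Your duality reduction and the bound $|L(u_n)-L(u)|\le \frac{1}{N\omega_N}\|D(u_n)-D(u)\|_{L^1}$ are exactly the paper's first steps (its (2.5)--(2.6)). The problem is the step you yourself single out as the only delicate point: the quantitative multilinear H\"older estimate for $\|D(u_n)-D(u)\|_{L^1}$ is false as stated. In the telescoping sum over columns of $D_j=\det[u_{x_1},\dots,u,\dots,u_{x_N}]$ there is one term in which the difference lands on the \emph{zeroth-order} slot, i.e.\ a term bounded by $\int_\Omega |u_n-u|\,\prod_{i=1}^{N-1}|\nabla u^{(i)}|$ with each $u^{(i)}\in\{u_n,u\}$. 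Here there are $N-1$ gradient factors, each only in $L^{N-1}$, so their product is merely in $L^1$ and H\"older forces the factor $|u_n-u|$ into $L^\infty$; that yields boundedness (since $|u_n-u|\le 2$) but no smallness, and in particular no bound by $C(\|\nabla u_n\|_{L^{N-1}}+\|\nabla u\|_{L^{N-1}})^{N-2}\|u_n-u\|_{W^{1,N-1}}$ --- note your right-hand side carries only $N-2$ gradient norms while this term carries $N-1$ gradient factors, none of which is a difference. One can even cook up $v$ with $|v|\le 2$, $\|v\|_{W^{1,N-1}}$ small, and $|\nabla u|^{N-1}$ an approximate Dirac mass on $\{|v|=1\}$ to see that no inequality of this shape can hold. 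The terms where the difference carries a derivative are fine ($N-1$ factors in $L^{N-1}$, exponents $\frac{1}{N-1}\cdot(N-1)=1$), and your ``$N=2$ is trivial'' remark overlooks that even there $D$ is bilinear, not linear, so the same bad term appears.

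The gap is repairable without changing your architecture, and the repair is precisely what the paper does: abandon the quantitative estimate and prove only $\|D(u_n)-D(u)\|_{L^1}\to 0$ by (generalized) dominated convergence. For the bad term, $|\nabla u_n|^{N-1}\to|\nabla u|^{N-1}$ in $L^1$ (hence is equi-integrable) and $u_n\to u$ in measure with $|u_n-u|\le 2$, so Vitali's theorem (or: compare with $\int|u_n-u|\,|\nabla u|^{N-1}$, which goes to $0$ by dominated convergence, plus the $L^1$-convergence of the gradient products) gives convergence to $0$; the remaining terms go to $0$ by the H\"older bound you already have. So your proof is one lemma short of correct: the displayed ``key step'' must be replaced by a qualitative convergence argument for the undifferentiated-difference term.
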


\begin{proof}
Note that for every $u, v \in W^{1,N-1}(\Omega; \RR^N) \cap L^\infty$ we have
\begin{equation}\label{2.5}
\big| \langle \Jac{u} - \Jac{v}, \zeta \rangle \big| \leq \|D(u) - D(v)\|_{L^1}\|\nabla\zeta\|_{L^\infty} \quad \forall \zeta \in C_0^\infty(\Omega).
\end{equation}
Thus, a standard argument gives
\begin{equation}\label{2.6}
\big| L(u) - L(v) \big| \leq \|D(u) - D(v)\|_{L^1}.
\end{equation}
If $(u_n)$ is a sequence converging strongly to $u$ in $W^{1, N-1}$, then by dominated convergence $D(u_n) \to D(u)$ in $L^1$ and the conclusion holds.
\end{proof}


\section{A Fubini-type argument}\label{sec7}

In Sections~\ref{sec6}--\ref{sec5} we present the main ingredients in the proof of Theorem~\ref{thm3.1}. The construction in those sections rely on an argument based on Fubini's theorem which we shall explain below. But first, given $1 \leq p < \infty$, let us introduce the following class of functions:
\begin{equation}\label{2.11}
\cR^{1,p}(\Omega) = 
\left\{ v \in W^{1,p}(\Omega;S^{N-1}) 
\left|
\begin{aligned}
&\text{there exist } a_1, \ldots, a_k \in \Omega \text{ such that}\\
&\text{$v$ is smooth in $\overline\Omega \setminus \{a_1, \ldots, a_k\}$}
\end{aligned}
\right.
\right\}.
\end{equation}
For later use, given $v \in \cR^{1,p}(\Omega)$, we denote by $S(v)$ the set of points of $\Omega$ where $v$ is not smooth (by definition this set is finite).

\bigskip

As we have already explained, smooth maps are not dense in $W^{1,p}(\Omega;S^{N-1})$ if $N-1 \leq p < N$. However,

\begin{thm}[Bethuel-Zheng~\cite{BetZhe:88}]\label{thm2.1}
If $N-1 \leq p < N$, then $\cR^{1,p}(\Omega)$ is dense in $W^{1,p}(\Omega;S^{N-1})$.
\end{thm}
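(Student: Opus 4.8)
The plan is to prove that $\cR^{1,p}(\Omega)$ is dense in $W^{1,p}(\Omega;S^{N-1})$ when $N-1 \le p < N$ by the standard dimension-reduction (``good grid'') argument going back to Bethuel-Zheng. First I would start with an arbitrary $u \in W^{1,p}(\Omega;S^{N-1})$. Since $u$ is a $W^{1,p}$ map into $\RR^N$, classical extension and mollification give smooth maps $\tilde u_\eps \in C^\infty(\overline\Omega;\RR^N)$ with $\tilde u_\eps \to u$ in $W^{1,p}$; these need not take values in $S^{N-1}$, but they do so approximately, and the point of the construction is to correct this on a cleverly chosen skeleton.

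Next I would tile $\RR^N$ by a grid of closed cubes of side length $\eps$ and, by a Fubini-type averaging argument over translations/rotations of the grid (as foreshadowed in Section~\ref{sec7}), select a grid such that on the union of the $(N-1)$-dimensional faces of the cubes meeting $\Omega$ one controls $\int_{\text{faces}} |\nabla u|^p$ and $\int_{\text{faces}} |u - \tilde u_\eps|^p$ by $C\eps^{-1}$ times the corresponding integrals over $\Omega$; in particular the restriction of $u$ to the $(N-1)$-skeleton lies in $W^{1,p}$ of that skeleton and, since $p \ge N-1$, the restriction to each $(N-1)$-face is VMO (indeed continuous when $p > N-1$), so $\tilde u_\eps/|\tilde u_\eps|$ is well-defined and close to $u$ on the skeleton. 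I would then define the approximating map cube by cube: on the $(N-1)$-skeleton use the projected smooth map $v := \tilde u_\eps/|\tilde u_\eps|$, and inside each cube $Q$ extend $v|_{\partial Q}$ by a ``homogeneous'' (degree-zero-cone-like) extension $x \mapsto v\big(\text{radial projection of }x\text{ onto }\partial Q\big)$ centered at the cube's center. This produces a map that is smooth away from the finitely many cube centers, i.e. an element of $\cR^{1,p}(\Omega)$, after a routine adjustment near $\partial\Omega$ to arrange smoothness on the closure.

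The remaining estimate is to show this map is $W^{1,p}$-close to $u$. On the skeleton the bound is the $(N-1)$-dimensional one chosen above. Inside a cube, the homogeneous extension has gradient of size $O(|x - x_Q|^{-1} \|\nabla_{\tan} v\|_{L^\infty(\partial Q)})$ in general, but the crucial scaling computation is that for a map defined on an $(N-1)$-sphere the $L^p$-norm over the $N$-dimensional cone of the homogeneous extension's gradient is, when $p < N$, finite and controlled by $\eps \cdot \eps^{-1}(\text{some power})$ times the $L^p$-norm of the tangential gradient on $\partial Q$; summing over all cubes and using the skeleton bound shows the total energy of the new map inside the cubes is bounded by $C\|\nabla u\|_{L^p(\Omega)}$ (not small, but bounded), and more refined accounting — splitting into cubes where $\int |\nabla u|^p$ is small versus large, exactly as in the good-ball/bad-ball dichotomy described in the introduction — upgrades ``bounded'' to ``small on the complement of a small set,'' which suffices after letting $\eps \to 0$. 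The main obstacle, and the one requiring care, is precisely this last energy bookkeeping: ensuring that the homogeneous extension inside each cube does not blow up the $L^p$ norm, which is exactly where the hypothesis $p < N$ is used (for $p \ge N$ the radial extension fails to be $W^{1,p}$, consistent with Theorem~\ref{thm1.0}), and ensuring the good/bad cube splitting genuinely makes the error small rather than merely bounded. I would then invoke Proposition~\ref{prop2.1}-style continuity and a diagonal argument over $\eps$ to finish.
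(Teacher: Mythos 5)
You should first note that the paper does not prove Theorem~\ref{thm2.1} at all: it is quoted from Bethuel--Zheng \cite{BetZhe:88} and used as a black box, so there is no in-paper argument to measure yours against. Your sketch follows the classical grid-and-cone strategy of \cite{BetZhe:88}, which is the right family of ideas, but as written it has a genuine gap at the decisive step --- one you yourself flag as ``the main obstacle'' without resolving it.

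The gap is this: you build the approximant by performing the homogeneous (cone) extension of $v|_{\partial Q}$ inside \emph{every} cube $Q$. The scaling computation gives, for $p<N$, $\|\nabla v^{\mathrm{hom}}\|^p_{L^p(Q)} \le C\,\eps\,\|\nabla_{\tan} v\|^p_{L^p(\partial Q)}$, and after the Fubini choice of grid this sums to $C\|\nabla u\|^p_{L^p(\Omega)}$ --- bounded, as you say. But bounded is not enough, and no after-the-fact ``refined accounting'' upgrades it: even for smooth $u$, the gradient of $x\mapsto v(\pi_{\partial Q}(x))$ involves the derivative of the radial projection, which is not close to the identity (it annihilates the radial direction and carries a factor $|x-x_Q|^{-1}$), so $\nabla v^{\mathrm{hom}}$ does \emph{not} converge to $\nabla u$ in $L^p$; doing the cone construction everywhere yields only an energy-bounded sequence, which is precisely the weak-versus-strong density phenomenon. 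The correct argument makes the good/bad dichotomy part of the \emph{construction}, not of the bookkeeping: the cone extension is performed only on the bad cubes, i.e.\@ those meeting $\{|\tilde u_\eps|\le 1/2\}$, equivalently those where the local energy exceeds $\lambda\eps^{N-p}$; their union has measure tending to $0$, so the added energy $C\int_{\mathrm{bad}}|\nabla u|^p$ tends to $0$ by absolute continuity of the integral. On the good cubes one keeps $\tilde u_\eps/|\tilde u_\eps|$ itself with no cone extension --- it is well defined there because the small local energy forces $|\tilde u_\eps|\ge 1/2$ (this, rather than VMO of $u$ on the skeleton, is what legitimizes the projection), and it converges strongly to $u$. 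With that correction, plus the routine care near $\partial\Omega$, across cube faces, and at $p=N-1$ that you already mention, the proof closes.
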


This result is particularly useful since it reduces the problem of studying maps in $W^{1,p}(\Omega; S^{N-1})$ into a problem where all maps have finitely many singularities. This is for instance one of the main ingredients in the proof of Theorem~\ref{thm2.2} above. For the sake of Theorem~\ref{thm3.1}, one could avoid Theorem~\ref{thm2.1}, but the proof becomes less transparent.

\bigskip
We show in this section that if $v \in \cR^{1,p}(\Omega)$ and if $B_r(x_0)$ is a sufficiently large ball contained in $\Omega$, then it is possible to find a sphere $\partial B_s(x_0)$ such that $v|_{\partial B_s(x_0)}$ is homotopic to a constant:

\begin{lemma}\label{lemma7.1}
Let $N-1 \leq p < N$. Given $v \in \cR^{1,p}(\Omega)$, let $r > 0$ be such that
\begin{equation}\label{7.1}
r > 4L(v).
\end{equation}
Then, for every $x_0 \in \Omega$ with $B_{2r}(x_0) \subset \Omega$ there exists $s \in \big(\frac{3 r}{2},2r \big)$ such that
\begin{equation}\label{7.2}
\deg{(v|_{\partial B_s(x_0)})} = 0 \quad \text{and} \quad \|\nabla v\|_{L^p({\partial B_s(x_0)})} \leq \frac{C}{r^{1/p}} \|\nabla v\|_{L^p(B_{2r}(x_0))};
\end{equation}
moreover, there exists $\psi \in C_0^\infty(B_{2r}(x_0))$ such that $\psi = 1$ on $B_s(x_0)$ and
\begin{equation}\label{7.2a}
\big| \langle \Jac{v}, (1-\psi) \zeta\rangle \big| \le L(v) \|\nabla\zeta\|_{L^\infty(\Omega)} \quad \forall \zeta \in C_0^\infty(\Omega).
\end{equation}
\end{lemma}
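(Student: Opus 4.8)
The plan is to combine a Fubini/averaging argument over the spheres $\partial B_\rho(x_0)$ for $\rho$ ranging over the annulus $(\frac{3r}{2}, 2r)$ with the fact that $v \in \cR^{1,p}(\Omega)$ has only finitely many singularities, so that the degree of $v$ restricted to a sphere avoiding $S(v)$ is well defined and computable via \eqref{2.8}. First I would apply the coarea formula to $\|\nabla v\|_{L^p(B_{2r}(x_0))}^p = \int_{3r/2}^{2r} \|\nabla v\|_{L^p(\partial B_\rho(x_0))}^p \, d\rho + \cdots$ (restricting the integration to the annulus only decreases the left-hand side), so that the average of $\|\nabla v\|_{L^p(\partial B_\rho)}^p$ over $\rho \in (\frac{3r}{2}, 2r)$ is at most $\frac{2}{r}\|\nabla v\|_{L^p(B_{2r}(x_0))}^p$; hence the set of ``bad'' radii where $\|\nabla v\|_{L^p(\partial B_\rho)}^p > \frac{C}{r}\|\nabla v\|_{L^p(B_{2r}(x_0))}^p$ has measure at most, say, $\frac{r}{8}$ for $C$ large enough. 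This gives the energy estimate in \eqref{7.2} for all $\rho$ outside a small exceptional set.

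Next I would control the degree. Since $v \in \cR^{1,p}(\Omega)$, the distributional Jacobian is $\Jac{v} = \omega_N \sum_{i=1}^k d_i \delta_{a_i}$ with $a_i \in S(v)$; for any sphere $\partial B_\rho(x_0)$ not passing through any $a_i$ and such that $v$ is continuous there (true for a.e. $\rho$, and in fact for all $\rho$ with $\partial B_\rho(x_0) \cap S(v) = \emptyset$ since $v$ is smooth away from $S(v)$), the degree $\deg(v|_{\partial B_\rho(x_0)})$ equals $\sum_{a_i \in B_\rho(x_0)} d_i$. The hypothesis $r > 4L(v)$ together with \eqref{2.10}–\eqref{2.13} (in the finite-singularity case the infimum is attained, and $L(v) = \sum_j |p_j - n_j|$ for an optimal pairing of the $a_i$ with signs, possibly using boundary points) forces: any positive singularity $p_j$ inside $B_{3r/2}(x_0)$ must be paired with a point $n_j$ at distance $\le L(v) < r/4$, hence $n_j \in B_{2r}(x_0)$; and more importantly, the signed sum $\sum_{a_i \in B_\rho(x_0)} d_i$ as $\rho$ increases through the annulus can only change as $\rho$ crosses a value $|a_i - x_0|$. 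I would argue that there must exist $\rho \in (\frac{3r}{2}, 2r)$ with $\deg(v|_{\partial B_\rho}) = 0$: if not, then for every such $\rho$ the (signed, with multiplicity) connection joining the singularities inside to those outside would have to cross $\partial B_\rho$, and since the annulus $(\frac{3r}{2},2r)$ has width $r/2 > 2L(v)$, summing the lengths of these crossings over a maximal subcollection of disjoint sub-annuli of width $L(v)$ would produce a connection of total length exceeding $L(v)$, contradicting the minimality. (Equivalently: any point $n_j$ paired with a $p_j \in B_{3r/2}(x_0)$ lies in $B_{3r/2 + L(v)}(x_0) \subset B_{7r/4}(x_0)$, so no connection segment reaches past radius $\frac{7r}{4}$ from inside $B_{3r/2}$, forcing $\deg(v|_{\partial B_\rho}) = 0$ for $\rho \in (\frac{7r}{4}, 2r)$.) Intersecting this set of good radii with the complement of the small exceptional set from the energy estimate, and with the (measure-zero, finite) set $\{|a_i - x_0|\}$, yields the desired $s \in (\frac{3r}{2}, 2r)$ satisfying \eqref{7.2}.

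Finally, for \eqref{7.2a}, I would pick $s$ as above, choose a cutoff $\psi \in C_0^\infty(B_{2r}(x_0))$ with $\psi \equiv 1$ on $B_s(x_0)$ and $\psi \equiv 0$ outside $B_{2r}(x_0)$, and observe that $(1-\psi)$ vanishes on $B_s(x_0)$, so that for $\zeta \in C_0^\infty(\Omega)$,
\[
\langle \Jac{v}, (1-\psi)\zeta \rangle = \omega_N \sum_{a_i \notin B_s(x_0)} d_i (1-\psi)(a_i) \zeta(a_i).
\]
Writing this as $\langle \Jac{v}, \eta \rangle$ with $\eta = (1-\psi)\zeta \in C_0^\infty(\Omega)$ does not immediately help since $\|\nabla \eta\|_\infty$ involves $\|\nabla\psi\|_\infty \sim 1/r$. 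Instead I would use the minimal-connection representation directly: pair up the singularities $a_i$ lying outside $B_s(x_0)$ according to an optimal connection realizing $L(v)$; since $\deg(v|_{\partial B_s}) = 0$, the net degree outside $B_s(x_0)$ contributed by this subsystem is balanced, and — after possibly rerouting through boundary points of $\Omega$, which are harmless — the portion of the minimal connection relevant to these points has total length $\le L(v)$, so that
\[
\big| \langle \Jac{v}, (1-\psi)\zeta \rangle \big| \le \omega_N \sum_{j} \big| \zeta(p_j) - \zeta(n_j) \big| \le \omega_N \Big(\sum_j |p_j - n_j|\Big) \|\nabla\zeta\|_{L^\infty} \le L(v)\,\omega_N\,\|\nabla\zeta\|_{L^\infty};
\]
dividing the definition \eqref{2.4} by $\omega_N$ was already built in, so I must be careful with the $\omega_N$ normalization — the cleanest route is to note that $(1-\psi)\zeta$ is itself an admissible competitor (up to rescaling $\|\nabla\zeta\|_\infty$) only after the cutoff is absorbed, so I would rather estimate directly via the representation $\Jac{v} = \omega_N\sum(\delta_{p_i} - \delta_{n_i})$ restricted to pairs with at least one endpoint outside $B_s(x_0)$, using that the sphere-degree condition lets me choose the pairing so these ``outside'' pairs form a sub-connection of length $\le L(v)$.

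\textbf{Main obstacle.} The delicate point is establishing \eqref{7.2a}: one needs to choose the radius $s$ not merely so that $\deg(v|_{\partial B_s}) = 0$, but so that the minimal connection of $v$ can be split by the sphere $\partial B_s(x_0)$ into an ``inside'' part and an ``outside'' part without increasing total length, i.e. so that no connecting segment of an optimal connection crosses $\partial B_s(x_0)$ more than necessary — or more precisely, so that the mass of $\Jac{v}$ strictly inside $B_s$ can be connected within $B_s$ (using the hypothesis $r > 4L(v)$, which guarantees connections are short relative to the annulus width). Making this splitting quantitative, and reconciling it with the $\omega_N$ normalization in the definition \eqref{2.4} of $L(v)$ so that the bound comes out as $L(v)\|\nabla\zeta\|_{L^\infty}$ with constant exactly $1$, is the part requiring the most care; the energy estimate and the existence of a zero-degree sphere are comparatively routine Fubini/coarea arguments.
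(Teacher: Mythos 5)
Your overall strategy is the paper's: represent $\Jac{v}$ via an optimal finite connection $\omega_N\sum_i(\delta_{p_i}-\delta_{n_i})$ with $\sum_i|p_i-n_i|=L(v)$, observe that the set of radii $t\in\big(\frac{3r}{2},2r\big)$ for which $\partial B_t(x_0)$ meets some segment $[p_i,n_i]$ has measure at most $L(v)<r/4$ (the map $x\mapsto|x-x_0|$ is $1$-Lipschitz on each segment), and intersect the complementary, crossing-free set of radii with the good radii from the Fubini/coarea energy estimate. For a crossing-free radius every segment lies entirely inside or entirely outside the sphere, so the $p_i$'s and $n_i$'s inside are equinumerous and the degree vanishes. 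One correction here: your parenthetical ``equivalent'' claim that $\deg(v|_{\partial B_\rho})=0$ for \emph{every} $\rho\in\big(\frac{7r}{4},2r\big)$ is false --- a short segment lying entirely in that outer annulus, say with $|p_j-x_0|=1.8r$ and $|n_j-x_0|=1.9r$, is crossed by $\partial B_{1.85r}$ and gives that sphere degree $\pm 1$. Only the measure-theoretic version of the argument survives, but that is all you need.

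The ``main obstacle'' you flag for \eqref{7.2a} dissolves once you notice that the radius $s$ you have already selected does the whole job: since $\partial B_s(x_0)$ meets no segment, neither does a thin closed annulus $\overline B_{s+\eps}(x_0)\setminus B_s(x_0)$ for $\eps$ small, so each segment lies either entirely in $B_s(x_0)$ or entirely outside $\overline B_{s+\eps}(x_0)$. Choose $\psi$ with $\psi=1$ on $B_s(x_0)$ and $\supp\psi\subset B_{s+\eps}(x_0)$ --- this localization of the support is essential, $\supp\psi\subset B_{2r}(x_0)$ alone is not enough. Then $(1-\psi)\zeta$ vanishes at both endpoints of every inside segment and equals $\zeta$ at both endpoints of every outside segment, whence $\langle\Jac{v},(1-\psi)\zeta\rangle=\omega_N\sum_{i\in I}\bigl[\zeta(p_i)-\zeta(n_i)\bigr]$ with $I$ the set of outside indices, and the bound follows from $\sum_{i\in I}|p_i-n_i|\le L(v)$; no re-pairing or rerouting of the connection is needed, and the size of $\|\nabla\psi\|_{L^\infty}$ never enters. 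Your discomfort with the normalization is legitimate: carried out carefully the estimate reads $|\langle\Jac{v},(1-\psi)\zeta\rangle|\le\omega_N L(v)\|\nabla\zeta\|_{L^\infty}$, which is exactly the version needed later, via definition \eqref{2.4}, to conclude $L(w)\le L(v)$.
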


\begin{proof}
By scaling and translation we can assume that $r = 1$ and $x_0 = 0$.
Let $p_1, \ldots, p_{\tilde k}$ and $n_1, \ldots, n_{\tilde k}$ in $\overline\Omega$ be such that
\begin{equation}\label{6.3}
\Jac{v} = \omega_N \sum_{i=1}^{\tilde k}{(\delta_{p_i} - \delta_{n_i} )} \quad \text{in } \cD'(\Omega)
\end{equation}
and
\begin{equation}\label{6.4}
L(v) = \sum_{i=1}^{\tilde k}{|p_i - n_i|}.
\end{equation}
Denote by $[p_i,n_i]$ be the segment joining $p_i$ to $n_i$. Let
$$
T = \left\{ t \in \left(\tfrac{3}{2},2 \right) ; \ \partial B_t \cap [p_i,n_i] = \emptyset \quad \forall i \in \big\{ 1, \ldots, \tilde k \big\} \right\}.
$$
Since $L(v) < 1/4$, it follows from the area formula that $|T| > 1/4$. On the other hand, by Fubini's theorem,
$$
\int_T dt \int_{\partial B_t} |\nabla v|^p \leq \int_{B_2} |\nabla v|^p.
$$
Thus, there exists $s \in T$ such that
\begin{equation}\label{6.5}
\int_{\partial B_{s}} |\nabla v|^p \leq 4 \int_{B_2} |\nabla v|^p.
\end{equation}
Moreover, since $s \in T$, the number of points $p_i$ and $n_i$ inside the ball $B_s$ (including multiplicities) are equal; thus, $\deg{(v|_{\partial B_{s}})} = 0$. It remains to show \eqref{7.2a}. To prove this we use the fact that $\partial B_s$ does not intersect any of the segments $[p_i, n_i]$. Thus, for some $\eps > 0$ small, the annulus $B_{s+\eps} \setminus B_s$ does not intersect any of those segments. Let $\psi \in C_0^\infty(B_2)$ be such that $\psi = 1$ in $B_s$. Denoting by $I \subset \{1, \ldots, \tilde k\}$ the set of indices such that $[p_i, n_i]$ is not in $B_{s+\eps}$, we then have
\[
\langle \Jac{v}, (1-\psi) \zeta \rangle = \sum_{i \in I}{\bigl[\zeta(p_i) - \zeta(n_i) \bigr]} \quad \forall \zeta \in C_0^\infty(\Omega)
\]
and thus
\[
\bigl|\langle \Jac{v}, (1-\psi) \zeta \rangle \bigr| \le L(v) \|\nabla\zeta\|_{L^\infty(\Omega)}  \quad \forall \zeta \in C_0^\infty(\Omega).
\qedhere
\]
\end{proof}


\section{Replacing $u$ on bad balls}\label{sec6}

Given $\lambda > 0$ and a ball $B_r(x_0)$ such that $B_{2r}(x_0) \subset \Omega$, we say that $B_r(x_0)$ is a bad ball
for a map $v \in W^{1,p}(\Omega; S^{N-1})$ if
\begin{equation}\label{6.0}
\int\limits_{B_{2r}(x_0)} |\nabla v|^p \ge \lambda r^{N-p}.
\end{equation}

We explain below how to replace $v$ by a smooth map on bad balls. This construction is possible if the radius $r$ is large enough compared to the length of the minimal connection $L(v)$. At this stage, the choice of the parameter $\lambda >0$ plays no role whatsoever in the proof.

\begin{prop}\label{prop6.1}
Let $N-1 < p < N$. If $B_r(x_0)$ is a bad ball for $v \in \cR^{1,p}(\Omega)$ and if
\begin{equation}\label{6.1}
r > 4 L(v),
\end{equation}
then one can find $w \in \cR^{1,p}(\Omega)$ such that
\begin{itemize}
\item[$(B_1)$] $w$ is smooth in $B_r(x_0)$;
\item[$(B_2)$] $w = v$ in $\Omega \setminus B_{2r}(x_0)$;
\item[$(B_3)$] $L(w) \leq L(v)$ and $S(w) \subset S(v)$;
\item[$(B_4)$] $\|w-v\|_{L^p(\Omega)} \leq C r \|\nabla w - \nabla v\|_{L^p(B_{2r}(x_0))}$;
\item[$(B_5)$] $\|\nabla w- \nabla v\|_{L^p(\Omega)} \leq C \|\nabla v\|_{L^p(B_{2r}(x_0))}.$
\end{itemize}
\end{prop}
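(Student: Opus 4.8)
The idea is to use Lemma 3.1 to pass from the ball $B_r(x_0)$ to a slightly larger sphere $\partial B_s(x_0)$ with $s\in(\tfrac{3r}{2},2r)$ on which $v|_{\partial B_s(x_0)}$ is homotopic to a constant (since its degree vanishes and, by the Sobolev embedding $W^{1,p}(\partial B_s;S^{N-1})\hookrightarrow C^0$ valid because $p>N-1$, degree zero implies null-homotopic). On $B_s(x_0)$ we then replace $v$ by a map obtained by homogeneous-type extension of a good boundary trace. The first step is therefore to apply Lemma 3.1 (after rescaling so $r=1$, $x_0=0$) to obtain $s$, the energy bound $\|\nabla v\|_{L^p(\partial B_s)}\le C\|\nabla v\|_{L^p(B_{2r})}$, the cut-off $\psi$, and the key estimate \eqref{7.2a}.

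Next I would carry out the actual replacement on $B_s(x_0)$. Using a Fubini argument one more time (or directly from the trace bound in Lemma 3.1), one finds that $v|_{\partial B_s}\in W^{1,p}(\partial B_s;S^{N-1})$ has small enough energy only in an averaged sense, so the construction should follow Bethuel--Zheng: since $v|_{\partial B_s}$ is homotopic to a constant, extend it to a map $\tilde w\in C^\infty(\overline{B_s};S^{N-1})$ — concretely, one writes a homotopy $H:\partial B_s\times[0,1]\to S^{N-1}$ from $v|_{\partial B_s}$ to a constant and defines $\tilde w(x)=H(s\,x/|x|,\,|x|/s)$ suitably smoothed near the center; the $L^p$-energy of this extension is controlled by $r\,\|\nabla(v|_{\partial B_s})\|_{L^p(\partial B_s)}^{?}$ — more precisely by scaling the extension has $\|\nabla\tilde w\|_{L^p(B_s)}^p\le C\,s\,\|\nabla v\|_{L^p(\partial B_s)}^p\cdot s^{\,?}$, and combining with \eqref{6.5} (i.e. $\|\nabla v\|_{L^p(\partial B_s)}\le \tfrac{C}{r^{1/p}}\|\nabla v\|_{L^p(B_{2r})}$) yields $\|\nabla\tilde w\|_{L^p(B_s)}\le C\|\nabla v\|_{L^p(B_{2r})}$. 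Then I would interpolate between $v$ and $\tilde w$ in the annulus $B_{2r}\setminus B_s$: since $v$ and $\tilde w$ agree on $\partial B_s$, a standard gluing (e.g. $w=\tilde w$ on $B_s$, and on the annulus a convex-combination-then-reproject, which is licit because on a thin annulus the two maps are close in $W^{1,p}$ after the homotopy is arranged to match) produces $w$. Finally set $w=v$ outside $B_{2r}(x_0)$, giving $(B_2)$.

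It then remains to verify the listed properties. Property $(B_1)$ is immediate from the smoothness of the homotopy extension; $(B_5)$ follows by adding $\|\nabla\tilde w\|_{L^p(B_s)}\le C\|\nabla v\|_{L^p(B_{2r})}$, $\|\nabla v\|_{L^p(B_{2r})}$ itself, and the annular gluing error, each bounded by $C\|\nabla v\|_{L^p(B_{2r})}$; $(B_4)$ is a Poincaré-type inequality on $B_{2r}$ applied to $w-v$, which vanishes outside $B_{2r}$ and has controlled gradient there, giving the factor $r$. For $(B_3)$: $S(w)\subset S(v)$ because we never create new singularities (inside $B_s$ the map $\tilde w$ is smooth, and $v$ had at most finitely many singular points in $B_{2r}$, all of which we may assume lie outside $\overline{B_s}$ by a further application of the Fubini/area-formula selection of $s$, or they are simply removed); and $L(w)\le L(v)$ because $\Jac w$ is supported among the same dipole points $(p_i,n_i)$ that fall \emph{outside} $B_s$ — precisely the content of \eqref{7.2a}, which bounds the part of the Jacobian we keep by $L(v)$ in the relevant dual norm, while the part inside $B_s$ is annihilated by the smooth extension. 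The main obstacle, and the step deserving the most care, is the quantitative control of the homotopy extension: one must choose the null-homotopy of $v|_{\partial B_s}$ so that its $L^p$-Dirichlet energy is comparable to that of $v|_{\partial B_s}$ (not merely finite), which is where the hypothesis $p<N$ is used — the cone/homotopy extension from an $(N-1)$-sphere lands in $W^{1,p}$ of the ball precisely in this range — and so that the resulting map glues to $v$ across the annulus without blowing up the energy; handling the singular points of $v$ (ensuring they avoid $\overline{B_s}$) and keeping $w\in\cR^{1,p}(\Omega)$ throughout is the bookkeeping that accompanies this.
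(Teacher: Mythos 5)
Your overall strategy coincides with the paper's: rescale so that $r=1$, $x_0=0$, invoke Lemma~\ref{lemma7.1} to find $s\in\bigl(\tfrac32,2\bigr)$ with $\deg{(v|_{\partial B_s})}=0$ and $\|\nabla v\|_{L^p(\partial B_s)}\le C\|\nabla v\|_{L^p(B_2)}$, fill $B_s$ using the null-homotopy furnished by Hopf's theorem, and deduce $(B_3)$ from \eqref{7.2a}. However, the step you yourself single out as ``the main obstacle'' --- producing a null-homotopy $H$ of $v|_{\partial B_s}$ whose energy on each time slice is comparable to that of $v|_{\partial B_s}$, so that $\tilde w(x)=H\bigl(s\tfrac{x}{|x|},\tfrac{|x|}{s}\bigr)$ satisfies $\|\nabla\tilde w\|_{L^p(B_s)}\le C\|\nabla v\|_{L^p(B_2)}$ --- is a genuine gap, not bookkeeping. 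Hopf's theorem only provides a \emph{continuous} homotopy, with no control on (or even finiteness of) its Dirichlet energy, and constructing a quantitatively controlled null-homotopy is a hard problem that the proof must not rely on. (A second, smaller confusion: you propose to interpolate between $v$ and $\tilde w$ on the annulus $B_2\setminus B_s$ even though the two maps already agree on $\partial B_s$; no gluing is needed there.)

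The fix is to decouple the two roles you are asking $H$ to play. First fill all of $B_s$ with the plain homogeneous extension $\tilde v(x)=v\bigl(s\tfrac{x}{|x|}\bigr)$: its energy is controlled purely by scaling, $\|\nabla\tilde v\|_{L^p(B_s)}=C\|\nabla v\|_{L^p(\partial B_s)}\le C\|\nabla v\|_{L^p(B_2)}$, and this is exactly where $p<N$ enters (integrability of $\rho^{N-1-p}$ near $\rho=0$). The map $\tilde v$ is continuous on $B_s\setminus\{0\}$ and its only possible new singularity, at the origin, has degree $\deg{(v|_{\partial B_s})}=0$. Only now bring in the homotopy $H$ --- smoothed by convolution and projection, which requires no estimates --- and insert it in a tiny ball: set $w_\eps(x)=H\bigl(\tfrac{|x|}{\eps},\,s\tfrac{x}{|x|}\bigr)$ on $B_\eps$ and $w_\eps=\tilde v$ elsewhere. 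Whatever the energy of $H$ may be, its contribution is $O\bigl(\eps^{(N-p)/p}\bigr)$, so $w_\eps\to\tilde v$ in $W^{1,p}$ and $(B_5)$ survives for $\eps$ small; $(B_4)$ then follows from Poincar\'e's inequality since $w_\eps=v$ outside $B_2$. Note finally that the singularities of $v$ inside $B_s$ are simply erased by the replacement rather than needing to be avoided: Lemma~\ref{lemma7.1} only arranges that $\partial B_s$ misses the dipole segments $[p_i,n_i]$, which is precisely what \eqref{7.2a}, and hence $L(w)\le L(v)$ in $(B_3)$, requires.
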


\begin{proof}
We shall use a strategy similar to the proof of \cite[Lemma~1]{Bet:91}. \\
We may assume that $\|\nabla v\|_{L^p(B_{2r}(x_0))} > 0$, for otherwise $v$ is constant in $B_{2r}(x_0)$ and there is nothing to prove. By scaling and translation, we may also suppose that $r = 1$ and $x_0 = 0$.
Since $r$ satisfies \eqref{6.1}, by Lemma~\ref{lemma7.1} there exists $s \in \big(\frac{3}{2}, 2\big)$ such that
\begin{equation}\label{6.2}
\deg{(v|_{\partial B_s})} = 0 \quad \text{and} \quad \|\nabla v\|_{L^p({\partial B_s})} \leq C \|\nabla v\|_{L^p(B_2)}.
\end{equation}
Let
\[
\tilde v(x) = 
\begin{cases}
v(x)	& \text{if $x \in \Omega \setminus B_s$,}\\
v\big( s \tfrac{x}{|x|} \big) & \text{if $x \in B_s$.}
\end{cases}
\]
Then, $\tilde v \in \cR^{1,p}(\Omega)$,
$\tilde v$ is continuous in $B_s \setminus \{0\}$ 
and, by the choice of $s$,
\[
\|\nabla \tilde v\|_{L^p(B_s)} = C \|\nabla v\|_{L^p(\partial B_s)} \le C \|\nabla v\|_{L^p(B_2)}.
\]
Using the triangle inequality, we then get
\begin{equation}\label{6.2a}
\begin{split}
\|\nabla \tilde v - \nabla v\|_{L^p(\Omega)} 
& = \|\nabla \tilde v - \nabla v\|_{L^p(B_s)}\\
& \leq \|\nabla \tilde v \|_{L^p(B_s)} + \| \nabla v\|_{L^p(B_s)} \le \widetilde C \|\nabla v\|_{L^p(B_2)}.
\end{split}
\end{equation}
Note that $\tilde v$ is continuous in a neighborhood of $\partial B_s$ but $\tilde v$ is not necessarily smooth there. By convolution and projection we may modify $\tilde v$ to make it smooth near $\partial B_s$. For this reason, we shall henceforth suppose that we do have $\tilde v \in \cR^{1,p}(\Omega)$. 

By \eqref{6.2a}, the map $\tilde v$ satisfies $(B_5)$ but $\tilde v$ need not satisfy $(B_1)$ because of its possible singularity at $0$.
We now use the fact that $\deg{(v|_{\partial B_s})} = 0$ to remove that singularity. Indeed, by the Hopf theorem, $v|_{\partial B_s}$ is homotopic to a constant. One can thus find a continuous homotopy $H : [0,1] \times \partial B_s \to S^{N-1}$ such that 
$H(t,\cdot) = p_0$ if $0 \le t \le \frac{1}{3}$ for some $p_0 \in S^{N-1}$ and $H(t, \cdot) = v|_{\partial B_s}$ if $\frac{2}{3} \le t \le 1$. Making a convolution of $H$ and projecting the resulting map back to $S^{N-1}$, one can even assume that $H$ belongs to $C^\infty\big([0,1] \times \partial B_s; S^{N-1} \big)$ (recall that $H$ was just assumed to be continuous and needed not be even in $W^{1, p}$). Since $H$ is constant on $[0, \frac{1}{3}]$, for every $0 < \eps < t$, the map
\[
w_\eps(x) = 
\begin{cases}
\tilde v(x)	& \text{if $x \in \Omega \setminus B_\eps$,}\\
H\big( \frac{|x|}{\eps}, x \big) & \text{if $x \in B_\eps$,}
\end{cases}
\]
belongs to $W^{1,p}(\Omega; S^{N-1})$ and is continuous in $B_s$. Since $w_\eps \to \tilde v$ strongly in $W^{1, p}$ as $\eps \to 0$, we can take $\eps > 0$ sufficiently small so that 
\begin{equation}\label{6.2b}
\|\nabla w_\eps - \nabla \tilde v\|_{L^p(\Omega)} \le \|\nabla v\|_{L^p(B_2)}.
\end{equation}
Combining \eqref{6.2a}--\eqref{6.2b} we deduce that $w_\eps$ also satisfies $(B_5)$. Since $w_\eps = v$ outside the ball $B_2$, by Poincar\'e's inequality,
\[
\|w-v\|_{L^p(\Omega)} = \|w-v\|_{L^p(B_2)} \leq C \|\nabla w - \nabla v\|_{L^p(B_2)}
\]
and thus $(B_4)$ also holds. In order to check property $(B_3)$ we can use \eqref{7.2a}. Indeed, since $w_\eps$ is smooth on $B_s$, $\Jac{w_\eps} = 0$ on $B_s$. Thus, if $\psi \in C_0^\infty(B_2)$ denotes the function given by Lemma~\ref{lemma7.1}, then
\[
\langle \Jac{w_\eps}, \zeta\rangle = \langle \Jac{w_\eps}, \psi\zeta\rangle + \langle \Jac{w_\eps}, (1-\psi)\zeta\rangle = \langle \Jac{w_\eps}, (1-\psi)\zeta\rangle,
\]
for every $\zeta \in C_0^\infty(\Omega)$. Since $v = w_\eps$ on $\Omega \setminus B_s$ and $\psi = 1$ on $B_s$, it follows that
\[
\langle \Jac{w_\eps}, \zeta\rangle = \langle \Jac{v}, (1-\psi)\zeta\rangle.
\]
Taking the supremum over all test functions $\zeta$ with $\|\nabla\zeta\|_{L^\infty} \le 1$, we deduce from \eqref{7.2a} that $L(w_\eps) \le L(v)$, which is the desired inequality.
\end{proof}

\begin{rem}\label{rem6.1}
\rm
Strictly speaking, in the previous proof we have not used the fact that $B_r(x_0)$ was a bad ball, but we do it now. In fact, since $B_r(x_0)$ is a bad ball,
\begin{align*}
|B_{2r}(x_0)|^{1/p} = \bigl(\omega_N (2r)^N \bigr)^{1/p} = \Bigl(\frac{2^N\omega_N}{\lambda} \Bigr)^{1/p} \, r (\lambda r^{N-p})^{1/p} \le C r \|\nabla v\|_{L^p(B_{2r}(x_0))},
\end{align*}
where the constant $C > 0$ depends on the choice of $\lambda$. We can thus rewrite property $(B_5)$ in the way it will be used in the proof of Theorem~\ref{thm3.1}:
\begin{itemize}
\item[$(B_5')$] $\|\nabla w- \nabla v\|_{L^p(\Omega)} \leq C \|\nabla v\|_{L^p(A)}$,
for some open set $A \subset B_{2r}(x_0)$ such that $|A|^{1/p} \leq C r \|\nabla v\|_{L^p(B_{2r}(x_0))}$.
\end{itemize}
\end{rem}


\section{Replacing $u$ on good balls}\label{sec5}

Given $\lambda > 0$ and a ball $B_r(x_0)$ such that $B_{2r}(x_0) \subset \Omega$, we say that $B_r(x_0)$ is a good ball
for a map $v \in W^{1,p}(\Omega; S^{N-1})$ if
\begin{equation}\label{5.1}
\int\limits_{B_{2r}(x_0)} |\nabla v|^p < \lambda r^{N-p}.
\end{equation}

In this section we explain how to replace $v$ by a smooth map on good balls. This construction strongly relies on a suitable choice of the parameter $\lambda$.

\begin{prop}\label{prop5.1}
Let $N-1 < p < N$. There exists $\lambda = \lambda(N,p) > 0$ such that if $B_r(x_0)$ is a good ball for $v \in \cR^{1,p}(\Omega)$ and if
\begin{equation}\label{5.1a}
r > 4 L(v),
\end{equation}
then one can find $w \in \cR^{1,p}(\Omega)$ such that
\begin{itemize}
\item[$(G_1)$] $w$ is smooth in $B_r(x_0)$;
\item[$(G_2)$] $w = v$ in $\Omega \setminus B_{2r}(x_0)$;
\item[$(G_3)$] $L(w) \leq L(v)$ and $S(w) \subset S(v)$;
\item[$(G_4)$] $\|w-v\|_{L^p(\Omega)} \leq C r \|\nabla w - \nabla v\|_{L^p(B_{2r}(x_0))}$;
\item[$(G_5)$] $\|\nabla w- \nabla v\|_{L^p(\Omega)} \leq C \|\nabla v\|_{L^p(A)}$,
for some open set $A \subset B_{2r}(x_0)$ such that $|A|^{1/p} \leq C r \|\nabla v\|_{L^p(B_{2r}(x_0))}$.
\end{itemize}
\end{prop}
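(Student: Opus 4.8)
The plan is to combine a ``projection onto a small geodesic disk'' with a local desingularization; the whole subtlety is to keep the modification of $v$ concentrated on a set $A$ small enough for $(G_5)$, for merely replacing $v$ on a ball $B_\rho(x_0)\supset B_r(x_0)$ as in Proposition~\ref{prop6.1} would only give $\|\nabla w-\nabla v\|_{L^p(\Omega)}\le C\|\nabla v\|_{L^p(B_{2r}(x_0))}$, and then $A=B_{2r}(x_0)$ would \emph{fail} the bound $|A|^{1/p}\le Cr\|\nabla v\|_{L^p(B_{2r}(x_0))}$ exactly because $B_r(x_0)$ is a good ball. After the usual reductions — scaling to $r=1$, $x_0=0$, and disposing of the trivial cases $\|\nabla v\|_{L^p(B_2)}=0$ and ``$v$ smooth on $B_1$'', in which $w=v$ works — one fixes once and for all a small angle $\delta_0=\delta_0(N)\in(0,\pi)$. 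The parameter $\lambda$ will be chosen small in terms of $N,p,\delta_0$.

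I would first invoke Lemma~\ref{lemma7.1} (legitimate since $r>4L(v)$): it furnishes $\rho\in(\tfrac32,2)$ with $\deg(v|_{\partial B_\rho})=0$ and $\|\nabla v\|_{L^p(\partial B_\rho)}\le C\|\nabla v\|_{L^p(B_2)}$ — and, avoiding finitely many values, with $\partial B_\rho\cap S(v)=\emptyset$ — together with the cut-off $\psi$ obeying \eqref{7.2a}. Since $B_1$ is a good ball, $\|\nabla v\|_{L^p(\partial B_\rho)}^p<C\lambda$; as $p>N-1=\dim\partial B_\rho$, Morrey's inequality on the sphere gives $\operatorname{osc}_{\partial B_\rho}v<C\lambda^{1/p}$, so that for $\lambda$ small $v(\partial B_\rho)$ is contained in the geodesic disk $B_{\delta_0/4}(q_0)\cap S^{N-1}$ for some $q_0\in S^{N-1}$. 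Let $\chi\colon S^{N-1}\to S^{N-1}$ be a fixed smooth map equal to the identity on $B_{\delta_0/2}(q_0)\cap S^{N-1}$ and equal to $q_0$ off $B_{\delta_0}(q_0)$ — such a $\chi$ exists because a closed geodesic disk is a retract of $S^{N-1}$ — and note $\deg\chi=0$, since $\chi$ is not onto. Set $w'=v$ on $\Omega\setminus B_\rho$ and $w'=\chi\circ v$ on $B_\rho$; near $\partial B_\rho$ the two definitions need not agree smoothly, but a convolution at a small scale there fixes this at a negligible energy cost, as in the proof of Proposition~\ref{prop6.1}. Two points are decisive: \emph{(i)} since $\chi$ is the identity on $B_{\delta_0/2}(q_0)$, one has $\nabla(w'-v)=0$ off the open set $E:=\{x\in B_\rho:\ v(x)\notin B_{\delta_0/2}(q_0)\}$ and $|\nabla(w'-v)|\le C|\nabla v|$ on $E$, hence $\|\nabla(w'-v)\|_{L^p(\Omega)}\le C\|\nabla v\|_{L^p(E)}$; \emph{(ii)} by multiplicativity of the degree, $\deg(w',a)=\deg\chi\cdot\deg(v,a)=0$ for every $a\in S(v)\cap B_\rho$, while $\deg(w',a)=\deg(v,a)$ for $a\in S(v)\setminus\overline{B_\rho}$, so that $\Jac{w'}$ vanishes on $B_\rho$ and the argument of Proposition~\ref{prop6.1} (using $\psi$ and \eqref{7.2a}) yields $L(w')\le L(v)$ and $S(w')\subset S(v)$.

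The measure of $E$ is controlled by Poincar\'e's inequality: $\int_{B_\rho}|v-\bar v|^p\le C\int_{B_\rho}|\nabla v|^p$, where $\bar v$ is the average of $v$ over $B_\rho$; since $v$ stays within $\delta_0/4$ of $q_0$ on $\partial B_\rho$ and $\int_{B_2}|\nabla v|^p<\lambda$, for $\lambda$ small $\bar v$ lies within $\delta_0/4$ of $q_0$, whence $|v(x)-\bar v|$ is bounded below by a constant multiple of $\delta_0$ for $x\in E$, and therefore $|E|^{1/p}\le C\|\nabla v\|_{L^p(B_2)}$. It remains to remove the — now degree-zero — singularities of $w'$ that lie in $B_1$; they are finitely many, and for each such $a$ one picks, by Fubini, a small radius $\sigma_a$ with $\partial B_{\sigma_a}(a)\cap S(w')=\emptyset$ and $\int_{\partial B_{\sigma_a}(a)}|\nabla w'|^p\le C\sigma_a^{-1}\int_{B_{2\sigma_a}(a)}|\nabla w'|^p$, the balls $B_{2\sigma_a}(a)$ pairwise disjoint, contained in $B_{3/2}$, and so small that $\sum_a|B_{2\sigma_a}(a)|\le\|\nabla v\|_{L^p(B_2)}^p$; then one replaces $w'$ on $B_{\sigma_a}(a)$ by the $0$-homogeneous extension of $w'|_{\partial B_{\sigma_a}(a)}$, desingularized at the centre by a homotopy to a constant (possible because the degree is $0$ and $p<N$, at an arbitrarily small extra energy cost), obtaining $w\in\cR^{1,p}(\Omega)$. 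With $A:=E\cup\bigcup_a B_{2\sigma_a}(a)\subset B_2$, one now verifies $(G_1)$ ($w$ is smooth on $B_1$, all its singularities there having been removed), $(G_2)$ ($w=v$ off $B_\rho\cup\bigcup_a B_{\sigma_a}(a)\subset B_2$), $(G_3)$ (deleting degree-zero points leaves the distributional Jacobian unchanged, so $\Jac{w}=\Jac{w'}$, $L(w)=L(w')\le L(v)$ and $S(w)\subset S(v)$), $(G_4)$ (Poincar\'e on $B_\rho$, since $w-v$ vanishes near $\partial B_\rho$), and $(G_5)$ ($\|\nabla(w-v)\|_{L^p(\Omega)}\le C\|\nabla v\|_{L^p(A)}$ by \emph{(i)} and the last two steps, and $|A|^{1/p}\le C\|\nabla v\|_{L^p(B_2)}$); finally one undoes the scaling.

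The hard part is precisely the control of $A$: one cannot afford to disturb $v$ on a whole ball, which is why the degree-killing map $\chi$ is used to confine the modification to $E$, and the crux is the quantitative estimate $|E|^{1/p}\le Cr\|\nabla v\|_{L^p(B_{2r}(x_0))}$ — this is where Poincar\'e's inequality together with the smallness of $\lambda$ genuinely enter. A second delicate point is that $B_r(x_0)$ may well contain singularities of nonzero degree (necessarily tightly clustered, by the good-ball bound): these are not removed individually but are all neutralized at once by the map $\chi$, after which $L(w)\le L(v)$ is restored through Lemma~\ref{lemma7.1} and \eqref{7.2a} just as in the bad-ball case — and this is the only place where $r>4L(v)$ is used.
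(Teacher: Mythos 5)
Your proposal is correct, and its core coincides with the paper's: the decisive steps --- the good sphere $\partial B_s$ supplied by Lemma~\ref{lemma7.1}, the Morrey estimate forcing $v(\partial B_s)$ into a small geodesic disk once $\lambda$ is small, the retraction of $S^{N-1}$ onto a neighborhood of that disk so that the modification is confined to the set where $v$ leaves the disk, the Chebyshev--Poincar\'e bound on the measure of that set, and the recovery of $L(w)\le L(v)$ from the degree-zero singularities via the cut-off $\psi$ and \eqref{7.2a} --- are exactly those of the paper. You diverge at two points. For the final smoothing, you collapse the complement of the disk to a point (so that $\deg\chi=0$) and then delete the resulting degree-zero point singularities one at a time by a Fubini choice of small spheres, homogeneous extension and homotopy filling, i.e.\ by reusing the machinery of Proposition~\ref{prop6.1}; the paper instead chooses $\Phi$ with $\Phi(S^{N-1})\subset D_1(\xi_0)$, whose convex hull stays away from the origin, and smooths in one stroke by mollifying $\Phi(v)$ behind a cut-off and projecting $v_\eps/|v_\eps|$ back to the sphere --- this avoids any further degree bookkeeping or enlargement of $A$, while your route is equally valid. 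Second, your measure estimate for $E$ passes through the mean value $\bar v$, and the claim that $\bar v$ lies near $q_0$ deserves a line of justification (for instance, a trace inequality gives $\|v-\bar v\|_{L^p(\partial B_\rho)}\le C\|\nabla v\|_{L^p(B_\rho)}\le C\lambda^{1/p}$, hence a point of $\partial B_\rho$ at which $v$ is simultaneously close to $\bar v$ and to $q_0$); the paper sidesteps this by applying Poincar\'e with zero boundary values to the scalar function $f=[3\,d(v,\xi_0)-1]^+$, which vanishes on $\partial B_s$, is at least $1$ on the exceptional set, and satisfies $|\nabla f|\le 3|\nabla v|$.
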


\begin{proof}
We can assume that
\begin{equation}\label{5.3}
\|\nabla v\|_{L^p(B_{2r}(x_0))} > 0,
\end{equation}
for otherwise $v$ is constant in $B_r(x_0)$ and the conclusion is obvious.
By scaling and translation, we may also assume that $r = 1$ and $x_0 = 0$.
Since $r$ satisfies \eqref{5.1a}, by Lemma~\ref{lemma7.1} there exists $s \in \big(\frac{3}{2}, 2\big)$ such that
\begin{equation}\label{6.2x}
\deg{(v|_{\partial B_s})} = 0 \quad \text{and} \quad \|\nabla v\|_{L^p({\partial B_s})} \leq C \|\nabla v\|_{L^p(B_2)}.
\end{equation}
Since $p > N - 1$, it follows from Morrey's estimates that $v|_{\partial B_s}$ is a continuous function and there exists $\lambda_1 > 0$ (depending only on $N$ and $p$) such that if 
\begin{equation}\label{5.6}
\|\nabla v \|_{L^p(\partial B_{s})} \leq \lambda_1,
\end{equation} 
then $v(\partial B_{s})$ is a subset of $S^{N-1}$ of diameter at most $1/3$. We then choose $\lambda$ so that
$$
C\lambda^{1/p} =  \lambda_1,
$$
where $C$ is the constant in \eqref{6.2x}.
We denote by $D_{1/3}(\xi_0)$ a closed geodesic disk of $S^{N-1}$ of radius $1/3$ containing $v(\partial B_{s})$ and centered at $\xi_0$. Let $\Phi : S^{N-1} \to S^{N-1}$ be a smooth function such that $\Phi(x)  = x$, $\forall x \in D_{2/3}(\xi_0)$, $\|\Phi'\|_{L^\infty}  \leq 2$ and
\begin{equation}\label{5.8}
\Phi(S^{N-1})  \subset D_1(\xi_0).
\end{equation}
Let 
\begin{equation}\label{5.10}
\tilde v =
\begin{cases}
v		& \text{in } \Omega \setminus B_{s},\\
\Phi \circ v	& \text{in } B_{s}.
\end{cases}
\end{equation}
Then, $\tilde v \in \cR^{1,p}(\Omega)$ and
\begin{equation}\label{5.11}
\int_\Omega |\nabla v - \nabla \tilde v|^p = \int_{B_{s}} |1 - \Phi'(v)|^p |\nabla v|^p 
\leq C \int_{U} |\nabla v|^p,
\end{equation}
where 
\begin{equation}\label{5.12}
A = \Big\{ x \in B_s \setminus S(v)\ ;\ v(x) \not\in D_{2/3}(\xi_0)  \Big\}.
\end{equation}
Since $v$ is continuous on $B_s \setminus S(v)$, $A$ is an open set. We now show that
\begin{equation}\label{5.12a}
|A|^{1/p} \leq C \|\nabla v\|_{L^p(B_2)}.
\end{equation}
For this purpose, consider the function
$$
f(x) = \big[3 \,d(v(x),\xi_0) - 1 \big]^+ \quad \forall x \in B_{s},
$$
where $d$ denotes the geodesic distance in $S^{N-1}$. Note that 
$$
f \geq 1 \text{ on $A$,}  \quad f = 0 \text{ on $\partial B_{s}$} \quad  \text{and} \quad |\nabla f| \leq 3|\nabla v| \text{ a.e.}
$$
Thus, by Chebyshev's and Poincar\'e's inequalities,
\begin{equation}\label{5.13}
|A| \leq \int_{B_{s}} |f|^p \leq C \int_{B_{s}} |\nabla f|^p \leq 3^p C \int_{B_{s}} |\nabla v|^p \le 3^p C \int_{B_2} |\nabla v|^p,
\end{equation}
which gives \eqref{5.12a}.
Although $\tilde v$ need not be continuous in $B_1$, its image is contained in a geodesic disk of $S^{N-1}$. A standard argument allows us to replace $\Phi \circ v$ by a function which is smooth in $B_1$.\\
We present a detailed proof for the convenience of the reader. We first take a family of nonnegative smooth mollifiers $(\rho_\eps) \subset C_0^\infty(\RR^N)$ and $\zeta \in C_0^\infty(B_{3/2})$ such that $\supp{\zeta} \subset B_{3/2}$ and $\zeta = 1$  on $B_1$. Consider
\begin{equation}\label{5.14}
v_\eps = (1-\zeta) \Phi(v) + \zeta \big[\rho_\eps * \Phi(v)\big] \quad \text{in } B_s.
\end{equation}
Denote by $V$ the convex hull in $\RR^N$ of the geodesic disk $D_1(\xi_0)$. By \eqref{5.8} we have
$$
\Phi(v(x)) \in V \quad \text{and} \quad \big[\rho_\eps * \Phi(v)\big](x) \in V \quad \forall x \in B_s.
$$
Thus,
\begin{equation*}
v_\eps(x) \in V \quad \forall x \in B_s.
\end{equation*}
On the other hand, we have $|y| \geq 1/2$ for every $y \in V$. Therefore,
\begin{equation}\label{5.15}
|v_\eps(x)| \geq \frac{1}{2} \quad \forall x \in B_s.
\end{equation}
In particular, 
\begin{equation}\label{5.16}
\frac{v_\eps}{|v_\eps|} \to \Phi (v) \quad \text{in } W^{1,p}.
\end{equation}
Take $\eps > 0$ sufficiently small so that
\begin{equation}\label{5.17}
\int_{B_s} \Big|\nabla \Bigl(\frac{v_\eps}{|v_\eps|}\Bigr) - \nabla \big(\Phi (v) \big) \Big|^p \leq \int_A |\nabla v|^p.
\end{equation}
If the integral in the right-hand side vanishes, take $A \subset B_1$ to be any open set of measure at most $\|\nabla v\|^p_{L^p(B_2)}$ for which the right-hand side is not zero; this is possible in view of \eqref{5.3}.
Let $w$ be the function given by
\begin{equation}\label{5.18}
w =
\begin{cases}
v		& \text{in } \Omega \setminus B_{s},\\
\dfrac{v_\eps}{|v_\eps|}	& \text{in } B_s.
\end{cases}
\end{equation}
This function satisfies $(B_5)$ and, by Poincar\'e's inequality, also satisfies $(B_4)$. The proof of the inequality $L(w) \le L(v)$ follows the same lines as in the previous lemma. Indeed, since the image of $\frac{v_\eps}{|v_\eps|}$ is contained in a small geodesic disk, all singularities of $w$ in $B_s$ have degree zero. Thus, $\Jac{w} = 0$ in $B_s$. Thus, if $\psi \in C_0^\infty(B_2)$ denotes the function given by Lemma~\ref{lemma7.1}, then for every $\zeta \in C_0^\infty(\Omega)$,
\[
\langle \Jac{w_\eps}, \zeta\rangle = \langle \Jac{v}, (1-\psi)\zeta\rangle,
\]
which implies $L(w) \le L(v)$.
\end{proof}


\section{Replacing $u$ on balls near the boundary}\label{sec8}

The reader probably have noticed that even though the constructions performed on bad balls and on good balls are different, the conclusions of Propositions~\ref{prop6.1} and \ref{prop5.1} ---taking into account Remark~\ref{rem6.1}--- are the same.
The goal of this section is twofold: to merge both statements and to take into account the possibility of performing the same construction on balls which need not be entirely contained in $\Omega$.

Note that the underlying notions of good balls and bad balls can be adapted to balls which are not entirely contained in $\Omega$ in a straightforward way. 
Actually, there are essentially two types of balls $B_r(x_0)$ one should really take care of: those such that $B_{2r}(x_0) \subset \Omega$, which have been studied in Sections~\ref{sec6} and \ref{sec5} above, and those such that $x_0 \in \partial\Omega$, which will be our main concern in the proof below. Indeed, the general construction can be always reduced to one of these types.

\begin{prop}\label{prop8.1}
Let $N-1 < p < N$. There exists $\delta = \delta(\Omega) > 0$ such that if $v \in \cR^{1,p}(\Omega)$ and if
\begin{equation}\label{8.2}
\delta > r > 4 L(v),
\end{equation}
then for every $x_0 \in \overline\Omega$ there exists $w \in \cR^{1,p}(\Omega)$ such that
\begin{itemize}
\item[$(M_1)$] $w$ is smooth in $B_r(x_0) \cap \overline\Omega$;
\item[$(M_2)$] $w = v$ in $\Omega \setminus B_{8r}(x_0)$;
\item[$(M_3)$] $L(w) \leq L(v)$ and $S(w) \subset S(v)$;
\item[$(M_4)$] $\|w-v\|_{L^p(\Omega)} \leq C r \|\nabla w - \nabla v\|_{L^p(B_{8r}(x_0) \cap \Omega) }$;
\item[$(M_5)$] $\|\nabla w- \nabla v\|_{L^p(\Omega)} \leq C \|\nabla v\|_{L^p(A)}$,
for some open set $A \subset B_{8r}(x_0)  \cap \Omega$ such that $|A|^{1/p} \leq C r \|\nabla v\|_{L^p(B_{8r}(x_0) \cap \Omega)}$.
\end{itemize}
\end{prop}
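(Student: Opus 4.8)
The plan is to reduce the statement to the two cases already treated in Propositions~\ref{prop6.1} and \ref{prop5.1} (the latter incorporating Remark~\ref{rem6.1} via property $(B_5')$), distinguishing balls that lie well inside $\Omega$ from those centered on $\partial\Omega$. First I would fix $\delta = \delta(\Omega) > 0$ small enough that $\Omega$ has a tubular neighborhood of width $\sim\delta$ near $\partial\Omega$ on which there is a bi-Lipschitz flattening chart (with Lipschitz constants bounded independently of the point on $\partial\Omega$), and small enough that $B_{8\delta}(x_0)$ is contained in such a chart whenever $x_0 \in \partial\Omega$. Given $x_0 \in \overline\Omega$ and $r$ with $\delta > r > 4L(v)$, I would split into two cases according to $\dist(x_0,\partial\Omega)$.

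\emph{Case 1: $\dist(x_0,\partial\Omega) \ge 2r$.} Then $B_{2r}(x_0) \subset \Omega$, so $B_r(x_0)$ is either a good ball or a bad ball for $v$, and Proposition~\ref{prop6.1} (in the bad case, using $(B_5')$ from Remark~\ref{rem6.1}) or Proposition~\ref{prop5.1} (in the good case) produces $w \in \cR^{1,p}(\Omega)$ satisfying $(M_1)$--$(M_5)$ directly (with $B_{8r}$ replaced by the smaller $B_{2r}$, which only makes the estimates stronger). Nothing new is needed here.

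\emph{Case 2: $\dist(x_0,\partial\Omega) < 2r$.} Pick $y_0 \in \partial\Omega$ with $|x_0 - y_0| < 2r$, so $B_r(x_0) \cap \overline\Omega \subset B_{3r}(y_0) \cap \overline\Omega$ and it suffices to make $v$ smooth on $B_{3r}(y_0) \cap \overline\Omega$ while controlling the modification on $B_{8r}(y_0) \cap \Omega$. Using the flattening chart I would straighten $\partial\Omega$ near $y_0$ and, by a reflection (even extension) across the flat piece of boundary, extend $v$ to a map $\bar v$ defined on a genuine ball $\widetilde B$ in $\RR^N$ centered at (the image of) $y_0$ of radius comparable to $8r$. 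The reflection preserves the class $\cR^{1,p}$ (the finite singular set is doubled but stays finite), multiplies $\|\nabla \bar v\|_{L^p}$ by a bounded factor, and one checks $L(\bar v) \le C L(v)$ since $\Jac{\bar v}$ is, up to the chart Jacobian and the reflection, a sum of Dirac masses supported in the reflected segment configuration of $v$; shrinking $\delta$ if necessary one arranges $4L(\bar v) < $ (radius of $\widetilde B$)$/8$ so that the hypotheses of Lemma~\ref{lemma7.1} and of Proposition~\ref{prop6.1}/\ref{prop5.1} apply to $\bar v$ on an interior ball of $\widetilde B$ covering the region of interest. Applying the interior construction to $\bar v$ yields $\bar w$ smooth on that ball with the analogues of $(B_1)$--$(B_5)$; since the reflection is an isometry on the flat side one may moreover arrange $\bar w$ to be symmetric under the reflection (e.g. by symmetrizing the homotopy in Proposition~\ref{prop6.1} or the mollification in Proposition~\ref{prop5.1}), so that $\bar w$ restricted to the original side descends through the chart to a well-defined $w \in \cR^{1,p}(\Omega)$ agreeing with $v$ outside $B_{8r}(y_0) \cap \Omega$. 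Transporting the estimates back through the bi-Lipschitz chart (which distorts $L^p$-norms of gradients, $L(\cdot)$, and Lebesgue measure by bounded factors) gives $(M_1)$--$(M_5)$, with all the $B_{2r}$'s absorbed into $B_{8r}(x_0)$ using $|x_0 - y_0| < 2r$; property $(M_3)$, $L(w) \le L(v)$, follows as in the interior propositions from \eqref{7.2a} once one notes the relevant boundary points contribute only through $\partial\Omega$, which is harmless for test functions with compact support in $\Omega$.

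The main obstacle is \emph{Case 2}: one must verify that reflection across the flattened boundary genuinely preserves membership in $\cR^{1,p}$ with controlled minimal connection, and — more delicately — that the smoothing constructions of Propositions~\ref{prop6.1} and \ref{prop5.1} can be performed \emph{equivariantly} with respect to the reflection, so that the modified map glues back into $\Omega$ without introducing a jump along $\partial\Omega$. An alternative that sidesteps the equivariance issue is to run the interior argument on $\bar v$ directly, obtain $\bar w$ smooth near $y_0$ with no symmetry imposed, and then simply set $w = \bar w$ on the $\Omega$-side of the chart: since $\bar w$ is smooth (in particular continuous) across the flat boundary piece, its restriction is automatically smooth up to $\partial\Omega$ and coincides with $v$ near $\partial B_{8r}(y_0)$, so no gluing along $\partial\Omega$ is needed and all required estimates transfer verbatim. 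I would take this second route, which makes the boundary case essentially a corollary of the interior one plus a bi-Lipschitz change of variables.
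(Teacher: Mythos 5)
Your overall architecture coincides with the paper's: the interior case is dispatched by Propositions~\ref{prop6.1} and \ref{prop5.1} (with Remark~\ref{rem6.1}), and the remaining case is recentred at a boundary point $y_0$ with $|x_0-y_0|<2r$ (so $B_r(x_0)\subset B_{3r}(y_0)$ and $B_{6r}(y_0)\subset B_{8r}(x_0)$) and the boundary is flattened by a chart with $\delta$ small. Where you genuinely diverge is in the flattened boundary construction itself. The paper stays on the half-ball: it reruns the Fubini argument of Lemma~\ref{lemma7.1} on the half-spheres $\partial B_s\cap\Omega$, exploits that a half-sphere is contractible so $v|_{\partial B_s\cap\Omega}$ is automatically null-homotopic (no degree condition is needed, which is why the bad-ball case is, if anything, easier at the boundary), and in the good-ball case runs the mollification of Proposition~\ref{prop5.1} with kernels chosen so that $v_\eps$ only samples values of $v$ inside $\Omega$. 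You instead reflect evenly across the flattened boundary, apply the interior propositions to the extension $\bar v$ on a genuine ball, and restrict back to the $\Omega$-side; you correctly discard your first, equivariant variant, which is unnecessary since restriction of the non-symmetric $\bar w$ already produces a map smooth up to the flat boundary that agrees with $v$ outside the modified region. Your route buys uniformity (one construction for both interior and boundary balls, and no one-sided mollification) at the price of bookkeeping: note that shrinking $\delta$ controls only the bi-Lipschitz constant of the chart and not the ratio $L(\bar v)/r$, so you must check explicitly that the factor coming from reflection (one gets $L(\bar v)\le 2L(v)$ by pairing each singularity of $v$ either with its original partner or, when that partner lay on $\partial\Omega$, with its own mirror image) still leaves room in the hypothesis $\rho>4L(\bar v)$ for an interior ball $B_\rho(y_0)$ with $3r\le\rho$ and $2\rho\le 6r$; it does, precisely because $(M_2)$--$(M_5)$ are stated with $B_{8r}(x_0)$ rather than $B_{2r}(x_0)$.

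One genuine wrinkle you should not gloss over: the even reflection of a map that is smooth up to a flat boundary is in general only Lipschitz, not $C^1$, across the reflecting hyperplane (the normal derivative changes sign), so $\bar v\notin\cR^{1,p}$ of the reflected ball, and Propositions~\ref{prop6.1} and \ref{prop5.1} do not apply to $\bar v$ verbatim. This is fixable: their proofs use only that the map is continuous outside a finite set, lies in $W^{1,p}$, and has distributional Jacobian equal to a finite sum of dipoles, and all three properties survive reflection (the mirror singularities carry opposite degrees). But as written your plan invokes the propositions for a map outside their stated class, so you must either verify this enlargement or observe that, upon restriction to the $\Omega$-side, your construction in fact coincides with the paper's direct half-ball one --- the radial projection $x\mapsto sx/|x|$ preserves each half-space because the flattened boundary passes through $y_0$ --- at which point the reflection is only a device for producing the null-homotopy and the Jacobian estimate, and the regularity of $\bar v$ on the other side becomes irrelevant.
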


\begin{proof}
If $B_{2r}(x_0) \subset \Omega$, the conclusion follows from Proposition~\ref{prop6.1} (and Remark~\ref{rem6.1}) or from Proposition~\ref{prop5.1} depending on whether $B_r(x_0)$ is a bad ball or a good ball. We may then restrict ourselves to the case where $B_{2r}(x_0) \cap \partial\Omega \neq \emptyset$. We shall reduce the problem to a situation where the ball is centered at some point of $\partial\Omega$. Indeed, since $B_{2r}(x_0) \cap \partial\Omega \neq \emptyset$, there exists $y_0 \in \partial\Omega$ such that $|y_0 - x_0| < 2r$ and thus
\[
B_r(x_0) \subset B_{3r}(y_0) \quad \text{and} \quad B_{6r}(y_0) \subset B_{8r}(x_0).
\]
It thus suffices to construct a map $w \in \cR^{1,p}(\Omega)$ such that $w$ is smooth in $B_{3r}(y_0) \cap \overline\Omega$, $w = v$ in $\Omega \setminus B_{6r}(y_0)$ and satisfying $(M_3)$--$(M_5)$.

In what follows, we assume that $B_{6r}(y_0) \cap \partial\Omega$ is flat and thus $B_{6r}(y_0) \cap \Omega$ coincides with a half-ball. By a translation and a scaling argument, we may suppose that $y_0 = 0$ and $r = \frac{1}{3}$. By the Fubini-type argument of Lemma~\ref{lemma7.1}, one finds $s \in \big( \frac{3}{2}, 2 \big)$ such that
\begin{equation}\label{8.2a}
\|\nabla v\|_{L^p(\partial B_s \cap \Omega)} \le C \| \nabla v\|_{L^p(B_2)}
\end{equation}
and $\partial B_s$ does not intersect any of the segments $[p_i, n_i]$, where the points $p_i$ and $n_i$ denote the singularities of $v$ arranged so as to satisfy \eqref{6.4}.

If $B_1$ is a \emph{bad ball} for $v$, in the sense that
\[
\int\limits_{B_2 \cap \Omega} |\nabla v|^p \ge \lambda
\]
for some parameter $\lambda > 0$ to be chosen later on, then we proceed as in the proof of Proposition~\ref{prop6.1} and define
\[
\tilde v(x) = 
\begin{cases}
v(x)	& \text{if $x \in \overline\Omega \setminus B_s$,}\\
v\big( s \tfrac{x}{|x|} \big) & \text{if $x \in B_s \cap \overline\Omega$,}
\end{cases}
\]
which is continuous except possibly at $0$ and satisfies
\begin{equation}\label{8.3}
\| \nabla \tilde v - \nabla v\|_{L^p(\Omega)} \le C \|\nabla v\|_{L^p(B_2 \cap \Omega)}.
\end{equation}
Since $u |_{\partial B_s \cap \Omega}$ is necessarily homotopic to a constant map (recall that $\partial B_s \cap \Omega$ is a half-sphere, which is topologically trivial), one can remove that singularity at $0$ as in Proposition~\ref{prop6.1} without losing property \eqref{8.3}. Thus, we get a map $w \in \cR^{1, p}(\Omega)$ which is now smooth on $B_s \cap \overline\Omega$ and
\[
\| \nabla w - \nabla v\|_{L^p(\Omega)} \le C \|\nabla v\|_{L^p(B_2 \cap \Omega)}.
\]
Since $B_1$ was assumed to be a bad ball, as in Remark~\ref{rem6.1} we have
\[
|B_2 \cap \Omega|^{1/p} \leq C_\lambda \|\nabla v\|_{L^p(B_2 \cap \Omega)},
\]
and thus $w$ satisfies $(M_5)$ with $A = B_{6r}(y_0)$ (which corresponds to $B_2$ after translation and scaling). Property $(M_4)$ just follows from Poincar\'e's inequality. Finally, since $\partial B_s$ does not intersect any of the segments $[p_i, n_i]$, one deduces that $L(w) \le L(v)$. Thus, $w$ satisfies all the required properties.

On the other hand, if $B_1$ is a \emph{good ball} for $v$, in the sense that
\[
\int\limits_{B_2 \cap \Omega} |\nabla v|^p < \lambda,
\]
then in view of \eqref{8.2a}, $\|\nabla v\|_{L^p(\partial B_s \cap \Omega)} < C \lambda^{1/p}$. Therefore, by Morrey's estimates we can fix some $\lambda  > 0$ sufficiently small (depending on $N$ and $p$) so that $v(\partial B_s \cap \Omega)$ is contained in a small geodesic disk of $S^{N-1}$. One can then proceed exactly as in the proof of Proposition~\ref{prop5.1} by taking a family of convolutions $(\rho_\eps)$ supported in $B_1 \cap \Omega$; this way the function $v_\eps$ remains well-defined and the conclusion follows.

We now deal with the case where $B_{6r}(y_0) \cap \partial\Omega$ is not necessarily flat.
By choosing $\delta > 0$ sufficiently small (depending on $\Omega$) it is possible to find a diffeomorphism $\Phi$ such that the image of $B_{3r}(y_0) \cap \Omega$ is contained in the half-ball $B_{3r}^+$ and the image of $B_{6r}(y_0) \cap \Omega$ contains the half-ball $B_{6r}^+$. We can then apply the previous construction to the map $v \circ \Phi^{-1}$.
The proof of the proposition is complete.
\end{proof}


\section{Proof of Theorem~\ref{thm3.1}}\label{sec4}

Let us assume momentarily that we have proved \eqref{3.2} for maps $u \in \cR^{1,p}(\Omega)$. We show that this implies a similar estimate for every $u \in W^{1,p}(\Omega; S^{N-1})$. Indeed, given $u \in W^{1,p}(\Omega; S^{N-1})$ we consider two separate case, whether $L(u) = 0$ or $L(u)>0$. We first assume that $L(u) = 0$. Taking a sequence $(u_n) \subset \cR^{1,p}(\Omega)$ such that $u_n \to u$ strongly in $W^{1,p}$, then by continuity of the length of the minimal connection,
\[
L(u_n) \to L(u) = 0.
\]
By \eqref{3.2} applied to $u_n$ and Lebesgue's dominated convergence theorem,
\[
\inf{\Big\{ \|u_n - \varphi\|_{W^{1,p}}\ ;\ \varphi \in C^\infty(\overline{\Omega};S^{N-1})  \Big\}} \to 0.
\]
Therefore, there exists a sequence $(\varphi_n) \subset  C^\infty(\overline{\Omega};S^{N-1})$ such that $\varphi_n \to u$ strongly in $W^{1,p}$. Hence, $u$ satisfies \eqref{3.2} with $A = \emptyset$. On the other hand, if $L(u) > 0$, then we first take an open set $A_1 \subset \Omega$ such that 
\[
\|\nabla u\|_{L^p(A_1)} > 0 \quad \text{and} \quad |A_1|^{1/p} \le L(u) \|\nabla u\|_{L^p(\Omega)}
\]
and then, by Theorem~\ref{thm2.2}, one can choose $v \in \cR^{1,p}(\Omega)$ such that
\[
\|u - v\|_{W^{1,p}(\Omega)} \le \|\nabla u\|_{L^p(A_1)}. 
\]
We may also assume that $v$ satisfies
\[
L(v) \le 2 L(u) \quad \text{and} \quad \|\nabla v\|_{L^p(\Omega)} \le 2 \|\nabla u\|_{L^p(\Omega)}.
\]
Since by assumption estimate \eqref{3.2} holds for $v$, there exists $\varphi \in  C^\infty(\overline{\Omega};S^{N-1})$ such that
\[
\|v - \varphi\|_{W^{1,p}(\Omega)} \le 2 C \|\nabla v\|_{L^p(A_2)},
\]
where $A_2 \subset \Omega$ is an open set satisfying $|A_2| \le C L(v) \|\nabla v\|_{L^p(\Omega)}$.
We then have
\begin{equation*}
\begin{split}
\|u - \varphi\|_{W^{1, p}} 
& \le \|u - v\|_{W^{1, p}} + \|v - \varphi\|_{W^{1, p}}\\
& \le \|\nabla u\|_{L^p(A_1)} + C\|\nabla v\|_{L^p(A_2)}\\
& \le \|\nabla u\|_{L^p(A_1)} + C \bigl( \|\nabla u\|_{L^p(A_2)} + \|\nabla u - \nabla v\|_{L^p(A_2)}\bigr)\\
& \le \|\nabla u\|_{L^p(A_1)} + C \bigl( \|\nabla u\|_{L^p(A_2)} + \|\nabla u\|_{L^p(A_1)}\bigr)\\
& \le (1 + 2 C) \|\nabla u\|_{L^p(A_1 \cup A_2)},
\end{split}
\end{equation*}
where 
\begin{equation*}
\begin{split}
|A_1 \cup A_2|^{1/p} 
& \le |A_1|^{1/p} + |A_2|^{1/p}\\
& \le L(u) \|\nabla u\|_{L^p(\Omega)} + C L(v) \|\nabla v\|_{L^p(\Omega)}\\
& \le L(u) \|\nabla u\|_{L^p(\Omega)} + 4C L(u) \|\nabla u\|_{L^p(\Omega)}\\
& = (1 + 4C) L(u) \|\nabla u\|_{L^p(\Omega)}.
\end{split}
\end{equation*}
Thus, $u$ also satisfies an estimate of the type \eqref{3.2}. 

\bigskip

In view of the above it suffices to establish \eqref{3.2} for maps $u \in \cR^{1,p}(\Omega)$. Let $\delta > 0$ be the quantity given by Proposition~\ref{prop8.1}, depending only on $\Omega$. We consider two separate cases:

\medskip
\nd
\textit{Case 1.}\/ $4 L(u) < \delta$.

\smallskip

Let $r > 0$ be such that $4 L(u) < r < \delta$. 
We can cover $\Omega$ with balls $(B_{r}(x_i))_{i\in I}$ in such a way that, for every $i \in I$, $x_i \in \overline\Omega$ and each ball $B_{8r}(x_i)$ intersects at most $\theta$ balls $B_{8r}(x_j)$, where $\theta$ depends only on the dimension $N$.
We can thus split the set of indices $I$ as $I=I_1\cup \cdots \cup I_{\theta+1}$ so that for any $i=1,\ldots, \theta+1$ and any distinct indices $j_1,j_2\in I_i$ we have $B_{8r}(x_{j_1})\cap B_{8r}(x_{j_2})=\emptyset$.

Starting from $u_0 = u$, we construct maps $u_1, \ldots, u_{\theta+1} \in W^{1,p}(\Omega; S^{N-1})$ inductively as follows. Given $k \geq 0$ and $u_{k}$ we apply Proposition~\ref{prop8.1} to the map $u_{k}$ and to each ball $B_r(x_i)$ with $i \in I_{k+1}$ until we exhaust $I_{k+1}$; denote by $u_{k+1}$ the map obtained by this procedure. 
Since the balls $\big(B_ {8r}(x_i)\big)_{i \in I_{k+1}}$ are disjoint, by properties $(M_4)$--$(M_5)$ we have
\begin{align}
\label{3.0x}
\|u_{k+1} - u_{k}\|_{L^p(\Omega)} 
& \leq C r \|\nabla u_{k+1} - \nabla u_k\|_{L^p(\Omega)}\\
\label{3.1x}
\|\nabla u_{k+1} - \nabla u_{k}\|_{L^p(\Omega)} 
& \leq C \| \nabla u_{k} \|_{L^p(E_k)}
\end{align}
for some open set $E_k \subset \Omega$ such that $|E_k|^{1/p} \leq C r \|\nabla u_k\|_{L^{p}(\Omega)}$; $E_k$ is the union of all sets $A$ arising from Proposition~\ref{prop8.1}.\\
By induction, it follows from \eqref{3.0x}--\eqref{3.1x} that for every $k = 1, \ldots, \theta + 1$ we have
\begin{align}
\label{3.2x}
\|u_k - u\|_{L^p(\Omega)} 
& \leq C_k r \|\nabla u\|_{L^{2p}(\Omega)}\\
\label{3.3x}
\|\nabla u_{k} - \nabla u\|_{L^p(\Omega)} 
& \leq C_k \| \nabla u \|_{L^p(F_k)}
\end{align}
where $F_k = \bigcup\limits_{j=0}^{k-1} E_j$.
We first prove \eqref{3.3x}. Since the conclusion is clear if $k=1$, we may assume that \eqref{3.2x} holds for some $k \ge 1$. We then have
\begin{align*}
\|\nabla u_{k+1} - \nabla u\|_{L^p(\Omega)}
& \leq \|\nabla u_{k+1} - \nabla u_k\|_{L^p(\Omega)} + \|\nabla u_{k} - \nabla u\|_{L^p(\Omega)} \\
\text{\small(by \eqref{3.1x})} \quad
& \leq C \|\nabla u_k\|_{L^p(E_k)} + \|\nabla u_{k} - \nabla u\|_{L^p(\Omega)} \\
\text{\small(by triangle inequality)} \quad
& \leq C \|\nabla u\|_{L^p(E_k)} + (1 + C)\|\nabla u_{k} - \nabla u\|_{L^p(\Omega)} \\
\text{\small(by induction)} \quad
& \leq C \|\nabla u\|_{L^p(E_k)} + (1 + C)C_k\|\nabla u\|_{L^p(F_{k-1})}\\
& \leq \big[C + (1 + C)C_k \big] \|\nabla u\|_{L^p(F_{k-1} \cup E_k)}.
\end{align*}
This establishes \eqref{3.3x}. Combining \eqref{3.0x} and \eqref{3.3x}, one gets \eqref{3.2x}. Note in addition that the set $F_k$ satisfies 
$|F_k|^{1/p} \leq C_k r \|\nabla u\|_{L^{p}(\Omega)}$. Indeed, proceeding by induction we have
\begin{align*}
|F_{k+1}|^{1/p} & \le |F_k|^{1/p} + |E_k|^{1/p} \\
\text{\small(by estimate on $|E_k|$)} \quad
& \leq  |F_k|^{1/p} + C r \|\nabla u_k\|_{L^{p}(\Omega)}\\
\text{\small(by induction)} \quad
& \leq C_k r \|\nabla u\|_{L^{p}(\Omega)} + C r \|\nabla u_k\|_{L^{p}(\Omega)}\\
\text{\small(by triangle inequality)} \quad
& \leq C_k r \|\nabla u\|_{L^{p}(\Omega)} + C r \big( \|\nabla u\|_{L^{p}(\Omega)} + \|\nabla u_k - \nabla u\|_{L^{p}(\Omega)} \big)\\
\text{\small(by \eqref{3.3x})} \quad
& \leq C_k r \|\nabla u\|_{L^{p}(\Omega)} + C r \big( \|\nabla u\|_{L^{p}(\Omega)} + C_k \| \nabla u \|_{L^p(F_k)} \big)\\
& \leq (C_k + C (1 + C_k)) r \|\nabla u\|_{L^{p}(\Omega)},
\end{align*}
which gives the estimate for the sets $|F_k|$.
Since the balls $(B_{r}(x_i))_{i\in I}$ cover $\Omega$ and we have swept away all the singularities of $u$ from of these balls, the map $u_{\theta+1}$ is smooth. 
We have thus obtained for every $r > 4 L(u)$ a map $\varphi_r \in C^\infty(\overline\Omega; S^{N-1})$, namely $u_{\theta+1}$, such that
\begin{align}
\label{3.5x}
\|\varphi_r - u\|_{L^p(\Omega)} 
& \leq C r \|\nabla u\|_{L^{2p}(\Omega)}\\
\label{3.6x}
\|\nabla \varphi_r - \nabla u\|_{L^p(\Omega)} 
& \leq C \| \nabla u \|_{L^p(A_r)}
\end{align}
where $A_r \subset \Omega$ is an open set such that $|A_r|^{1/p} \leq C r \|\nabla u\|_{L^{p}(\Omega)}$. 
If $L(u) = 0$, it follows from dominated convergence that $\varphi_r \to u$ strongly in $W^{1,p}$ and thus \eqref{3.2} holds with $A = \emptyset$. 
Otherwise, $L(u) > 0$, in which case we can take $r \approx 4 L(u)$.

\medskip
\nd
\textit{Case 2.}\/ $4 L(u) \geq \delta$.

\smallskip
We show the conclusion holds by taking $A = \Omega$.
Indeed, by an easy variant of Poincar\'e's inequality, there exists $\alpha_u \in S^{N-1}$ such that
\begin{equation}\label{4.2}
\|u - \alpha_u\|_{L^p(\Omega)} \leq C \|\nabla u \|_{L^p(\Omega)},
\end{equation}
where the constant $C > 0$ does not depend on $u$; thus,
$$
\|u - \alpha_u\|_{W^{1,p}(\Omega)} \leq (1 + C) \|\nabla u\|_{L^p(\Omega)}.
$$
On the other hand, by H\"older's inequality,
\begin{equation*}
L(u) \leq |\Omega|^{1- \frac{N-1}{p}} \|\nabla u\|_{L^p(\Omega)}^{N-1}.
\end{equation*}
Thus,
\begin{equation}\label{4.1}
L(u) \|\nabla u\|_{L^p(\Omega)} \geq \frac{\big(L(u)\big)^{\frac{N}{N-1}}}{|\Omega|^{\frac{1}{N-1} - \frac{1}{p}}} \geq \frac{(\delta/4)^{\frac{N}{N-1}}}{|\Omega|^{\frac{1}{N-1}}} |\Omega|^{1/p} = C_0 |\Omega|^{1/p},
\end{equation}
where $C_0 > 0$ is a constant depending on $N$ and $\Omega$.

\medskip
In both cases, we have obtained estimate \eqref{3.2}. The proof of the theorem is complete.
\qed


\section{Proof of Theorem~\ref{thm1.4}}\label{sec3}

The implication $(\Leftarrow)$ follows from Theorem~\ref{thm3.1} if $N-1 < p < N$ or from Theorem~\ref{thm1.4a} if $p = N - 1$. To prove the converse,
let $(\varphi_n) \subset C^\infty(\overline{\Omega};S^{N-1})$ be a sequence such that 
\begin{equation*}
\varphi_n \to u \quad \text{strongly in } W^{1,p}.
\end{equation*}
For every $n \geq 1$, we have $\Jac{\varphi_n} = 0$; thus, $L(\varphi_n) = 0$. In view of Proposition~\ref{prop2.1}, this implies 
$L(u) = 0$ or, equivalently, $\Jac{u} = 0$ in $\cD'(\Omega)$. \qed


\section*{Acknowledgments}

The authors would like to thank P.~Bousquet for interesting discussions and the Department of Mathematics of the Universit\'e Fran\c cois Rabelais (Tours, France), where part of this work was carried out. The second author was supported by the Fonds de la Recherche scientifique--FNRS and by the Fonds sp\'ecial de Recherche.



\end{document}